\newtheorem{thrm}{Theorem}[section]
\newtheorem{lem}[thrm]{Lemma}
\newtheorem{prop}[thrm]{Proposition}
\newtheorem{cor}[thrm]{Corollary}
\theoremstyle{definition}
\newtheorem{definition}[thrm]{Definition}
\newtheorem{remark}[thrm]{Remark}
\numberwithin{equation}{section}
\newcommand{\labeq}[1]{\label{eq:#1}}
\newcommand{\refeq}[1]{(\ref{eq:#1})}
\newcommand{\labt}[1]{\label{thm:#1}}
\newcommand{\reft}[1]{Theorem~\ref{thm:#1}}
\newcommand{\labl}[1]{\label{lemma:#1}}
\newcommand{\refl}[1]{Lemma~\ref{lemma:#1}}
\newcommand{\labd}[1]{\label{definition:#1}}
\newcommand{\refd}[1]{Definition~\ref{definition:#1}}
\newcommand{\labc}[1]{\label{coro:#1}}
\newcommand{\refc}[1]{Corollary~\ref{coro:#1}}
\newcommand{\e}{\epsilon}
\newcommand{\formalsum}{\sum_{n=1}^{\infty} \frac {E_n} {q_1 q_2 \ldots q_n}}
\newcommand{\Fformalsum}{\sum_{n=1}^{\infty} \frac {E_{F,n}} {q_1 q_2 \ldots q_n}}
\newcommand{\dimh}[1]{\hbox{dim$_{\hbox{H}}$} #1}
\newcommand{\dimht}{\hbox{dim$_{\hbox{H}}$} \left(\Theta_{Q,S}\right)}
\newcommand{\tq}{\Theta_{Q,S}}
\newcommand{\tqp}{\Theta_{Q,S}'}
\newcommand{\dimhtp}{\hbox{dim$_{\hbox{H}}$}\left( \tqp\right)}
\newcommand{\la}{\lambda}
\newcommand{\windim}{\hbox{windim }}
\newcommand{\LS}[1]{\sim_{s_{ #1 }}}
\newcommand{\al}{\alpha}
\newcommand{\be}{\beta}
\newcommand{\floor}[1]{\lfloor #1 \rfloor}
\newcommand{\NQ}{\mathscr{N}(Q)}
\newcommand{\N}[1]{\mathscr{N}( #1 )}
\newcommand{\Nk}[2]{\mathscr{N}_{#2}( #1 )} 
\newcommand{\DNQ}{\mathscr{DN}(Q)}
\newcommand{\DN}[1]{\mathscr{DN}( #1 )} 
\newcommand{\RNQ}{\mathscr{RN}(Q)}
\newcommand{\RN}[1]{\mathscr{RN}( #1 )} 
\newcommand{\RNk}[2]{\mathscr{RN}_{#2}( #1 )}
\newcommand{\RNisect}{\bigcap_{j=1}^\infty \DN{Q_j} \backslash \RNk{Q_j}{1}}
\author{Bill Mance}
\address{
              Department of Mathematics \\
              University of North Texas \\
              General Academics Building 435\\
              1155 Union Circle \#311430 \\
              Denton, TX 76203-5017
              Tel.: +1-940-369-7374\\
              Fax: +1-940-565-4805\\    }
\email{mance@unt.edu}
\keywords{Cantor series, Normal numbers}
\subjclass{Primary 11K16, Secondary 11A63}
\begin{document}

\title[The Hausdorff dimension of certain sets of normal numbers]{On the Hausdorff dimension of countable intersections of certain sets of normal numbers}

\begin{abstract}
We show that the set of numbers that are $Q$-distribution normal but not simply $Q$-ratio normal has full Hausdorff dimension.  It is further shown under some conditions that countable intersections of sets of this form still have full Hausdorff dimension even though they are not winning sets (in the sense of W. Schmidt).  As a consequence of this, we construct many explicit examples of numbers that are simultaneously distribution normal but not simply ratio normal with respect to certain countable families of basic sequences.  Additionally, we prove that some related sets are either winning sets or sets of the first category.
\end{abstract}

\thanks{Research of the author is partially supported by the U.S. NSF grant DMS-0943870.  The author would like to thank the referee many valuable suggestions.}

\maketitle

%----------------------------------------------------------------------------------------------------------------------------------
\section{Introduction}

The $Q$-Cantor series expansion, first studied by G. Cantor in \cite{Cantor}\footnote{G. Cantor's motivation to study the Cantor series expansions was to extend the well known proof of the irrationality of the number $e=\sum 1/n!$ to a larger class of numbers.  Results along these lines may be found in the monograph of J. Galambos \cite{Galambos}.  See also \cite{TijdemanYuan} and \cite{HT}.  },
is a natural generalization of the $b$-ary expansion. $Q=(q_n)_{n=1}^{\infty}$ is a {\it basic sequence} if each $q_n$ is an integer greater than or equal to $2$.
Given a basic sequence $Q$, the {\it $Q$-Cantor series expansion} of a real $x$ in $\mathbb{R}$ is the (unique)\footnote{Uniqueness can be proven in the same way as for the $b$-ary expansions.} expansion of the form
\begin{equation} \labeq{cseries}
x=\floor{x}+\sum_{n=1}^{\infty} \frac {E_n} {q_1 q_2 \ldots q_n},
\end{equation}
where $E_0=\floor{x}$ and $E_n$ is in $\{0,1,\ldots,q_n-1\}$ for $n\geq 1$ with $E_n \neq q_n-1$ infinitely often. We abbreviate \refeq{cseries} with the notation $x=E_0.E_1E_2E_3\ldots$ w.r.t. $Q$.

Clearly, the $b$-ary expansion is a special case of \refeq{cseries} where $q_n=b$ for all $n$.  If one thinks of a $b$-ary expansion as representing an outcome of repeatedly rolling a fair $b$-sided die, then a $Q$-Cantor series expansion may be thought of as representing an outcome of rolling a fair $q_1$ sided die, followed by a fair $q_2$ sided die and so on.

For a given basic sequence $Q$, let $N_n^Q(B,x)$ denote the number of times a block $B$ occurs starting at a position no greater than $n$ in the $Q$-Cantor series expansion of $x$. Additionally, define\footnote{For the remainder of this paper, we will assume the convention that the empty sum is equal to $0$ and the empty product is equal to $1$.}
$$
Q_n^{(k)}=\sum_{j=1}^n \frac {1} {q_j q_{j+1} \ldots q_{j+k-1}} \hbox{ and }  T_{Q,n}(x)=\left(\prod_{j=1}^n q_j\right) x \pmod{1}.
$$

A. R\'enyi \cite{Renyi} defined a real number $x$ to be {\it normal} with respect to $Q$ if for all blocks $B$ of length $1$,
\begin{equation}\labeq{rnormal}
\lim_{n \rightarrow \infty} \frac {N_n^Q (B,x)} {Q_n^{(1)}}=1.
\end{equation}
If $q_n=b$ for all $n$ and we restrict $B$ to consist of only digits less than $b$, then \refeq{rnormal} is equivalent to {\it simple normality in base $b$}, but not equivalent to {\it normality in base $b$}. A basic sequence $Q$ is {\it $k$-divergent} if
$
\lim_{n \rightarrow \infty} Q_n^{(k)}=\infty.
$
$Q$ is {\it fully divergent} if $Q$ is $k$-divergent for all $k$ and {\it $k$-convergent} if it is not $k$-divergent.  A basic sequence $Q$ is {\it infinite in limit} if $q_n \rightarrow \infty$.

\begin{definition}\labd{1.7} A real number $x$ is {\it $Q$-normal of order $k$} if for all blocks $B$ of length $k$,
$$
\lim_{n \rightarrow \infty} \frac {N_n^Q (B,x)} {Q_n^{(k)}}=1.
$$
We let $\Nk{Q}{k}$ be the set of numbers that are $Q$-normal of order $k$.  A real number $x$ is {\it $Q$-normal} if
$
x \in \NQ := \bigcap_{k=1}^{\infty} \Nk{Q}{k}
$
and $x$ is {\it simply $Q$-normal} if it is $Q$-normal of order $1$.   Additionally, $x$ is {\it $Q$-ratio normal of order $k$} (here we write $x \in \RNk{Q}{k}$) if for all blocks $B_1$ and $B_2$ of length $k$
$$
\lim_{n \to \infty} \frac {N_n^Q (B_1,x)} {N_n^Q (B_2,x)}=1.
$$
We say that $x$ is {\it $Q$-ratio normal} if
$
x \in \RNQ := \bigcap_{k=1}^{\infty} \RNk{Q}{k}.
$
A real number~$x$ is {\it $Q$-distribution normal} if
the sequence $(T_{Q,n}(x))_{n=0}^\infty$ is uniformly distributed mod $1$.  Let $\DNQ$ be the set of $Q$-distribution normal numbers.
\end{definition}

It is easy to show that for every basic sequence $Q$, the set of $Q$-distribution normal numbers has full Lebesgue measure.
For $Q$ that are infinite in limit,
it has been shown that the set of all real numbers $x$  that are $Q$-normal of order $k$ has full Lebesgue measure if and only if $Q$ is $k$-divergent \cite{Mance4}.  Early work in this direction has been done by A. R\'enyi \cite{Renyi}, T.  \u{S}al\'at \cite{Salat4}, and F. Schweiger \cite{SchweigerCantor}.  Therefore if $Q$ is infinite in limit, then the set of all real numbers $x$ that are $Q$-normal has full Lebesgue measure if and only if $Q$ is fully divergent.  We will show that $\RNk{Q}{1}$ is a set of zero measure if $Q$ is infinite in limit and $1$-convergent.  This will follow immediately from a result of P. Erd\H{o}s and A. R\'enyi \cite{ErdosRenyiConvergent}.
%Additionally, we will show that $\RNk{Q}{k}$ and $\Nk{Q}{k}$ are zero measure sets if $Q$ is $k$-convergent \cite{Mance6}.

Note that in base~$b$, where $q_n=b$ for all $n$,
 the corresponding notions of $Q$-normality, $Q$-ratio normality, and $Q$-distribution normality are equivalent. This equivalence
is fundamental in the study of normality in base $b$. It is surprising that this
equivalence breaks down in the more general context of $Q$-Cantor series for general $Q$.

\begin{figure}\label{fig:figure1}
\caption{}
\begin{tikzpicture}[>=stealth',shorten >=1pt,node distance=3.8cm,on grid,initial/.style    ={}]
%\centering
  \node[state]          (NQ)                        {$\mathsmaller{\NQ}$};
  \node[state]          (RNQ) [left =of NQ]    {$\mathsmaller{\RNQ}$};
  \node[state]          (NQRNQ) [above right =of NQ]    {$\mathsmaller{\NQ \cap \RNQ}$};
  \node[state]          (RNQDNQ) [above left=of RNQ]    {$\mathsmaller{\RNQ \cap \DNQ}$};
  \node[state]          (NQDNQ) [above left =of NQ]    {$\mathsmaller{\NQ \cap \DNQ}$};
  \node[state]          (NQRNQDNQ) [above right =of NQDNQ]    {$\mathsmaller{\NQ \cap \RNQ \cap \DNQ}$};
  \node[state]          (DNQ) [above right=of RNQDNQ]    {$\mathsmaller{\DNQ}$};
\tikzset{mystyle/.style={->,double=black}} 
\tikzset{every node/.style={fill=white}} 
\path (RNQDNQ)     edge [mystyle]    (RNQ)
      (RNQDNQ)     edge [mystyle]     (DNQ)
      (NQ)     edge [mystyle]     (RNQ)
%      (NQRNQ)     edge [mystyle]     (RNQ) %consider taking out
      (NQDNQ)     edge [mystyle]     (RNQDNQ)
%      (NQDNQ)     edge [mystyle]     (RNQ) %consider taking out
%      (NQDNQ)     edge [mystyle]     (NQRNQ) %consider taking out
%      (NQDNQ)     edge [mystyle]     (DNQ) %consider taking out
%      (NQRNQDNQ)     edge [mystyle]     (DNQ) %consider taking out
%      (NQRNQDNQ)     edge [mystyle]     (NQRNQ) %consider taking out
      (NQDNQ)     edge [mystyle]     (NQ);
\tikzset{mystyle/.style={<->,double=black}}
\path (NQRNQDNQ)     edge [mystyle]    (NQDNQ)
	(NQ)     edge [mystyle]    (NQRNQ);
\end{tikzpicture}
\end{figure}

We refer to the directed graph in Figure~\ref{fig:figure1} for the complete containment relationships between these notions when $Q$ is infinite in limit and fully divergent.  The vertices are labeled with all possible intersections of one, two, or three choices of the sets $\NQ$, $\RNQ$, and $\DNQ$.  The set labeled on vertex $A$ is a subset of the set labeled on vertex $B$ if and only if there is a directed path from vertex $A$ to vertex $B$. %\footnote{The underlying undirected graph in Figure~\ref{fig:figure1} has an isomorphic copy of complete bipartite graph $K_{3,3}$ as a subgraph.  Thus, it is not planar and the directed graph that connects two vertices if and only if there is a containment relation between the two labels is more difficult to analyze.}  
 For example, $\NQ \cap \DNQ \subseteq \RNQ$, so all real numbers that are $Q$-normal and $Q$-distribution normal are also $Q$-ratio normal.  These relations are fully explored and examples are given in \cite{ppq1}.

It is usually most difficult to establish a lack of a containment relationship.  The first non-trivial result in this direction was in \cite{AlMa} where a basic sequence $Q$ and a real number $x$ is constructed where $x \in \NQ \backslash \DNQ$.\footnote{This real number $x$ satisfies a much stronger condition than not being $Q$-distribution normal: $T_{Q,n}(x) \to 0$.}  By far the most difficult of these to establish is the existence of a basic sequence $Q$ where $\RNQ \cap \DNQ \backslash \NQ \neq \emptyset$.  This case is considered in \cite{ppq1} and requires more sophisticated methods.  Other related examples may be found in \cite{ManceDissertation},\cite{Mance3}, and \cite{ppq1}.

It should be noted that for every $Q$ that is fully divergent infinite and infinite in limit, the sets $\RNQ \backslash \NQ$, $\DNQ \backslash \RNQ$, and $\NQ \backslash \DNQ$ are non-empty.  It is likely that $\RNQ \cap \DNQ \backslash \NQ$ is also always non-empty.  In this paper, we will be concerned with the Hausdorff dimension of sets of this form.

\begin{definition}
Let $P=(p_n)$ and $Q=(q_n)$ be basic sequences.  We say that $P \sim_s Q$ if
$$
q_n=\prod_{j=1}^s p_{s(n-1)+j}.
$$
\end{definition}
The main result of this paper is the following theorem, which concerns the Hausdorff dimension of countable intersections of sets of the form $\DNQ \backslash \RNk{Q}{1}$:

\begin{thrm}\labt{mainthrm}
Suppose that $(Q_j)_{j=1}^\infty$ is a sequence of basic sequences that are infinite in limit.  Then
$$
\dimh \left( \RNisect      \right)=1
$$
if either of the following conditions hold.
\begin{enumerate}
\item For all $j$, the basic sequence $Q_j$ is $1$-convergent.
\item The basic sequence $Q_1$ is $1$-divergent and there exists some basic sequence $S=(s_n)$ with $$Q_1 \LS{1} Q_2 \LS{2} Q_3 \LS{3} Q_4 \cdots.$$
\end{enumerate}
\end{thrm}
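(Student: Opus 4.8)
For the first hypothesis I claim the set is in fact co-null, so that its dimension is trivially $1$. By the remarks recalled above each $\DN{Q_j}$ has full Lebesgue measure, and since each $Q_j$ is infinite in limit and $1$-convergent, the quoted consequence of the result of Erd\"os and R\'enyi shows that $\RNk{Q_j}{1}$ is Lebesgue-null; hence every factor $\DN{Q_j}\backslash\RNk{Q_j}{1}$ is co-null. A countable intersection of co-null sets is co-null, so $\RNisect$ has full Lebesgue measure and therefore $\dimh{\left(\RNisect\right)}=1$.

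Under the second hypothesis the set is Lebesgue-null, so a genuine construction is needed: since $Q_1$ is $1$-divergent, $\Nk{Q_1}{1}$ is co-null (by the measure characterization above), and $\Nk{Q_1}{1}\subseteq\RNk{Q_1}{1}$, so $\RNk{Q_1}{1}$ is co-null and already the $j=1$ factor is null. I would take the finest expansion $Q_1$ as a master coordinate and write $x=0.E_1E_2\cdots$ with respect to $Q_1$. Putting $r_j:=s_1 s_2\cdots s_{j-1}$ (so $r_1=1$), the grouping relations give $(q_j)_1\cdots(q_j)_n=(q_1)_1\cdots(q_1)_{r_j n}$, whence $T_{Q_j,n}(x)=T_{Q_1,r_j n}(x)$ and each $Q_j$-digit is a fixed function of the block $E_{r_j(m-1)+1}\cdots E_{r_j m}$ of $r_j$ consecutive $Q_1$-digits. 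Thus $x\in\DN{Q_j}$ is equivalent to uniform distribution of the subsequence $\left(T_{Q_1,r_j n}(x)\right)_n$, and failure of $\RNk{Q_j}{1}$ becomes a statement about frequencies of prescribed $Q_1$-digit blocks.

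I would then partition the positions of the $Q_1$-expansion into long consecutive free windows separated by a sparse sequence of control windows, the latter aligned to every grouping and chosen so sparse that $\sum_{n\le N,\,n\in\mathrm{control}}\log(q_1)_n\big/\sum_{n\le N}\log(q_1)_n\to0$. On every free position I forbid the single digit value $1$ and allow all of $\{0,2,3,\dots,(q_1)_n-1\}$; since $(q_1)_n\to\infty$ this deletes one value out of $(q_1)_n$ and changes the empirical distribution of the renormalized digits by only $O(1/(q_1)_n)\to0$, so it does not interfere with distribution normality at any level. On the control positions the digits are prescribed. The resulting digit-restricted Cantor-series set, equipped with its natural measure, has by the mass distribution principle local dimension $\liminf_N\sum_{n\le N}\log|S_n|/\sum_{n\le N}\log(q_1)_n=1$, because the control positions are logarithmically negligible and $\log((q_1)_n-1)/\log(q_1)_n\to1$ on the free positions. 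To destroy ratio normality simultaneously at every level, observe that any $Q_j$-digit $v_j$ whose block contains the forbidden value $1$ can occur only inside control windows, so by never placing it there its count stays zero, while I force the $Q_j$-digit $0$ (an all-zero block) to appear on infinitely many control windows, so its count tends to infinity; hence $N_n^{Q_j}(0,x)/N_n^{Q_j}(v_j,x)\not\to1$ and $x\notin\RNk{Q_j}{1}$ for every $j$.

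The crux, and the step I expect to dominate the work, is to make every point of this set $Q_j$-distribution normal for all $j$ at once. As $j$ grows the orbit $\left(T_{Q_1,r_j n}(x)\right)_n$ is an ever sparser subsequence of the $Q_1$-orbit, and uniform distribution does not pass to such subsequences; so I face countably many equidistribution requirements that must all be arranged on control windows of vanishing logarithmic weight, while the dominant free windows, which supply the dimension, also feed terms into each subsequence and must not spoil it. I would control this by a diagonal schedule: the $k$-th control window is charged with driving down the Weyl sums $\frac1M\sum_{n\le M}e^{2\pi i h\,T_{Q_1,r_j n}(x)}$ for all $j\le k$ and $|h|\le k$, using that a block of prescribed $Q_1$-digits places $T_{Q_1,m}(x)=0.E_{m+1}E_{m+2}\cdots$ within $1/(q_1)_{m+1}$ of any chosen target; by making the windows dedicated to each fixed $j$ long enough and recurrent enough, the Ces\`aro discrepancy of every subsequence tends to $0$ regardless of the intervening free digits. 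Verifying that this schedule coexists with the digit-forbidding used to kill ratio normality and keeps the control windows logarithmically negligible, so that the dimension estimate of the previous paragraph still yields $1$, is the delicate bookkeeping that completes the argument.
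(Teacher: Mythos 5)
Your treatment of part (1) coincides with the paper's: each $\DN{Q_j}$ is co-null, each $\RNk{Q_j}{1}$ is null by the Erd\H{o}s--R\'enyi corollary, and a countable intersection of co-null sets is co-null. That part is fine.

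For part (2) there is a genuine gap, and it sits exactly where you flag the crux. Your set draws its dimension from ``free windows'' on which $E_n$ may be any element of $\{0,2,3,\dots,q_n-1\}$, and you assert that a sparse schedule of control windows forces the discrepancy of every subsequence $\left(T_{Q_1,r_jn}(x)\right)_n$ to $0$ \emph{regardless of the intervening free digits}. That cannot hold. Equidistribution is a Ces\`aro-type property measured against the counting density of indices: if the free positions have positive upper density, then the point of your set whose free digits are all $0$ has $E_n/q_n\approx 0$ on a positive proportion of indices and, by \reft{Salat}, is not even $Q_1$-distribution normal; no choice of digits on the complementary control positions can repair this. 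So to put \emph{every} point of the set into $\bigcap_j\DN{Q_j}$ you must give the free positions counting density $0$. But then for a sequence as tame as $q_n=n$ (infinite in limit and $1$-divergent) the free positions also carry a vanishing fraction of $\sum_{n\le N}\log q_n$, and your mass-distribution estimate yields dimension $0$, not $1$. The two demands you place on the free windows --- full logarithmic weight, yet no influence on any of countably many equidistribution conditions --- are incompatible for general $Q$. The sentence claiming that deleting one digit value ``changes the empirical distribution of the renormalized digits by only $O(1/q_n)$'' conflates the set of admissible digits with the actual digit string of a point; the empirical distribution of a point whose digits are chosen freely from a large set is not controlled at all.

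The paper resolves precisely this tension by leaving no position free: at every $n$ the renormalized digit $E_n/q_n$ is confined to an interval of length $1/a(n)^2$ whose \emph{location} is deterministic and sweeps out finer and finer arithmetic progressions, so that for every point of $\tq$ the relevant subsequences are almost-arithmetic progressions and hence uniformly distributed (\refl{modS} through \refl{tozero}), while the entropy comes from the fact that such an interval still contains about $q_n^{1-1/i(n)}$ integers with $i(n)\to\infty$, which is what feeds Falconer's lower bound in \reft{dimhtq}. If you insist on genuinely free digits, you would have to abandon the ``every point'' formulation and instead prove that the natural measure on your Cantor set gives full mass to $\bigcap_j\DN{Q_j}$ --- a strong-law statement for each sparse subsequence that your sketch does not supply.
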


\begin{cor} Suppose that $(Q_j)_{j=1}^\infty$ is a sequence of basic sequences that are infinite in limit.  Then
$$
\dimh \left(\bigcap_{j=1}^\infty \DN{Q_j} \backslash \RN{Q_j} \right)=\dimh \left(\bigcap_{j=1}^\infty \DN{Q_j} \backslash \N{Q_j}    \right)=1,
$$
under the same conditions as \reft{mainthrm}.  Additionally, for any $Q$ that is infinite in limit,
$$
\dimh \left(\DN{Q} \backslash \RN{Q}\right) = \dimh \left(\DN{Q} \backslash \N{Q} \right) =  1.
$$
\end{cor}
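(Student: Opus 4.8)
The plan is to deduce the corollary from \reft{mainthrm} by purely soft means: elementary containments among the normality sets together with the monotonicity of Hausdorff dimension under inclusion. First I would record the chain $\N{R} \subseteq \RN{R} \subseteq \RNk{R}{1}$, valid for every basic sequence $R$. The first inclusion is the arrow $\NQ \to \RNQ$ of Figure~\ref{fig:figure1}: if every block of length $k$ has relative frequency tending to $1$, then the ratio of any two such frequencies tends to $1$, so $Q$-normality implies $Q$-ratio normality. The second inclusion is immediate because $\RN{R} = \bigcap_{k} \RNk{R}{k}$ lies inside its $k=1$ factor. Removing these sets from $\DN{R}$ reverses the chain, giving
$$\DN{R} \backslash \RNk{R}{1} \subseteq \DN{R} \backslash \RN{R} \subseteq \DN{R} \backslash \N{R}.$$

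Next I would take $R = Q_j$, intersect over $j$, and invoke monotonicity. This produces
$$\RNisect \subseteq \bigcap_{j=1}^\infty \DN{Q_j} \backslash \RN{Q_j} \subseteq \bigcap_{j=1}^\infty \DN{Q_j} \backslash \N{Q_j} \subseteq [0,1).$$
Under hypotheses (1) or (2) the leftmost set has Hausdorff dimension $1$ by \reft{mainthrm}, and since every set displayed is contained in $[0,1)$, the two middle sets are squeezed to dimension $1$ as well. This is exactly the first displayed equality of the corollary.

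For the final ``additionally'' assertion about a single $Q$ infinite in limit, the same squeezing $\DN{Q} \backslash \RNk{Q}{1} \subseteq \DN{Q} \backslash \RN{Q} \subseteq \DN{Q} \backslash \N{Q}$ reduces everything to proving $\dimh\left(\DN{Q} \backslash \RNk{Q}{1}\right) = 1$, which I would obtain as a special case of \reft{mainthrm}. If $Q$ is $1$-convergent, I take the constant sequence $Q_j = Q$, so hypothesis (1) holds and the intersection over $j$ collapses to $\DN{Q} \backslash \RNk{Q}{1}$. If $Q$ is $1$-divergent, I fix any basic sequence $S = (s_n)$, set $Q_1 = Q$, and define each $Q_{j+1}$ from $Q_j$ through $Q_j \LS{j} Q_{j+1}$, i.e.\ by grouping $s_j$ consecutive terms of $Q_j$ into products; since products of terms tending to infinity tend to infinity, each $Q_j$ is again infinite in limit, hypothesis (2) is met, and the intersection over $j$ lies inside its $j=1$ factor $\DN{Q} \backslash \RNk{Q}{1}$, so monotonicity again forces dimension $1$.

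The one place that is not automatic is this last construction: for $1$-divergent $Q$ one cannot simply feed the constant sequence into hypothesis (2), since $Q \LS{j} Q$ fails in general, so the mild point is to manufacture a sequence of infinite-in-limit basic sequences that satisfies (2) and begins with $Q$ — which the $\LS{j}$-construction above supplies. Everything else is the inclusion $\N{\cdot} \subseteq \RN{\cdot} \subseteq \RNk{\cdot}{1}$ and the monotonicity of Hausdorff dimension, so no genuine obstacle remains beyond correctly placing each case within the hypotheses of \reft{mainthrm}.
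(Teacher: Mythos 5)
Your proof is correct and follows essentially the same route as the paper: the containment chain $\N{Q} \subset \RN{Q} \subset \RNk{Q}{1}$ plus monotonicity of Hausdorff dimension, which is all the paper's one-line proof invokes. The only material you add is the (routine but worth recording) verification that a single infinite-in-limit $Q$ can be placed at the head of a family satisfying hypothesis (1) or (2) of \reft{mainthrm} --- constant sequence in the $1$-convergent case, the $\LS{j}$-generated chain in the $1$-divergent case --- a detail the paper leaves implicit.
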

\begin{proof}
This is immediate as $\N{Q} \subseteq \RN{Q} \subseteq \RNk{Q}{1}$ for every basic sequence $Q$ that is infinite in limit.
\end{proof}

We note the following fundamental fact about $Q$-distribution normal numbers that follows directly from a theorem of T. \u{S}al\'at \cite{Salat}:\footnote{The original theorem of T. \u{S}al\'at says:
Given a basic sequence $Q$ and a real number $x$ with
$Q$-Cantor series expansion
$x=\floor{x}+\formalsum,$ if
$\lim_{N\to\infty}{1\over N}\sum_{n=1}^N {1\over q_n}=0$ then
$x$ is $Q$-distribution normal iff $E_n=\floor{\theta_n q_n}$ for some uniformly distributed sequence $(\theta_n)$.  N. Korobov \cite{Korobov} proved this theorem under the stronger condition that $Q$ is infinite in limit.  For this paper, we will only need to consider the case where $Q$ is infinite in limit.}
\begin{thrm}\labt{Salat}
Suppose that $Q=(q_n)$ is a basic sequence and $\lim_{N\to\infty}{1\over N}\sum_{n=1}^N {1\over q_n}=0$. Then $x=E_0.E_1E_2\cdots$ w.r.t. $Q$ is $Q$-distribution normal if and only if $(E_n/q_n)$ is uniformly distributed mod 1.
\end{thrm}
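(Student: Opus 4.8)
The plan is to reduce $Q$-distribution normality of $x$ to a statement about the single ``tail digit'' $E_{n+1}/q_{n+1}$ by computing the orbit $T_{Q,n}(x)$ explicitly, and then to compare the resulting sequence with $(E_n/q_n)$ using Weyl's criterion together with the hypothesis.

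First I would evaluate $T_{Q,n}(x)$. Multiplying the expansion $x=\lfloor x\rfloor+\sum_{m=1}^\infty E_m/(q_1\cdots q_m)$ by $\prod_{j=1}^n q_j$ and reducing mod $1$, both the integer part and every term with $m\le n$ contribute an integer and drop out, leaving
$$
T_{Q,n}(x)=\sum_{m=n+1}^\infty \frac{E_m}{q_{n+1}q_{n+2}\cdots q_m}\in[0,1).
$$
Since the right-hand side already lies in $[0,1)$ no further reduction is needed. Isolating the first summand gives $T_{Q,n}(x)=E_{n+1}/q_{n+1}+q_{n+1}^{-1}T_{Q,n+1}(x)$ with $T_{Q,n+1}(x)\in[0,1)$, and hence the key estimate
$$
0\le T_{Q,n}(x)-\frac{E_{n+1}}{q_{n+1}}<\frac{1}{q_{n+1}}.
$$
After a shift of index this reads $|T_{Q,n-1}(x)-E_n/q_n|<1/q_n$ for every $n\ge 1$.

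Next I would compare the two sequences $(T_{Q,n-1}(x))_{n\ge1}$ and $(E_n/q_n)_{n\ge1}$ term by term through Weyl's criterion. For each nonzero integer $h$ the Lipschitz bound $|e^{2\pi i h s}-e^{2\pi i h t}|\le 2\pi|h|\,|s-t|$ yields
$$
\left|\frac1N\sum_{n=1}^N e^{2\pi i h T_{Q,n-1}(x)}-\frac1N\sum_{n=1}^N e^{2\pi i h E_n/q_n}\right|\le \frac{2\pi|h|}{N}\sum_{n=1}^N\frac{1}{q_n}.
$$
By hypothesis the right-hand side tends to $0$ as $N\to\infty$, so for each $h$ the two Weyl sums have identical limiting behavior. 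Because $(T_{Q,n-1}(x))_{n\ge1}$ is merely a re-indexing of $(T_{Q,n}(x))_{n\ge0}$, whose uniform distribution mod $1$ is by definition $Q$-distribution normality, Weyl's criterion then gives that $x$ is $Q$-distribution normal if and only if $(E_n/q_n)$ is uniformly distributed mod $1$.

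The explicit evaluation of $T_{Q,n}(x)$ is routine; the hypothesis enters only at the final display, which is in fact the heart of the argument: the assumption $\tfrac1N\sum_{n\le N}q_n^{-1}\to0$ is exactly what forces the Cesàro average of the digit gaps $1/q_n$ to vanish and thereby makes the two families of Weyl sums asymptotically indistinguishable. I expect the only genuinely delicate points to be the bookkeeping of the index shift between $(T_{Q,n}(x))_{n\ge0}$ and $(E_n/q_n)_{n\ge1}$, and checking that the closeness estimate holds literally --- both quantities lie in $[0,1)$ and differ by less than $1/q_n\le 1/2$, so there is no wrap-around mod $1$ to contend with.
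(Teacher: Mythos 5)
Your argument is correct and complete. Note that the paper itself gives no proof of this statement: it is quoted as a consequence of a theorem of \u{S}al\'at \cite{Salat} (see the accompanying footnote), so there is no internal proof to compare against. Your route --- computing $T_{Q,n}(x)=\sum_{m=n+1}^{\infty}E_m/(q_{n+1}\cdots q_m)$, extracting the estimate $0\le T_{Q,n-1}(x)-E_n/q_n<1/q_n$, and then using the Lipschitz bound on $e^{2\pi i h t}$ to show the two families of Weyl sums differ by at most $\frac{2\pi|h|}{N}\sum_{n=1}^N q_n^{-1}\to 0$ --- is the standard proof of this equivalence and uses the hypothesis exactly where it must. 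The only point you gloss over is why the tail sum lies in $[0,1)$ with no further reduction mod $1$: this relies on the normalization $E_n\ne q_n-1$ infinitely often from the definition of the $Q$-Cantor series expansion, which forces the strict upper bound $\sum_{m=n+1}^{\infty}E_m/(q_{n+1}\cdots q_m)<1$; it is worth one sentence but is not a gap.
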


The first part of \reft{mainthrm} is trivial: we show in this case that the sets $\DN{Q_j} \backslash \RNk{Q_j}{1}$ are of full measure.  Part (2) will be more difficult to establish.  We will provide an explicit construction of a Cantor set $\tq \subsetneq \RNisect$ with $\dimh(\tq)=1$ by refining the methods used in \cite{Mance3}.  Moreover, this construction will give us explicit examples of members of $\RNisect$ for any collection of basic sequences $(Q_j)$ that are infinite in limit with $Q_1 \LS{1} Q_2 \LS{2} Q_3 \LS{3} Q_4 \cdots$.
To see that the second part of \reft{mainthrm} would not immediately follow if we were to prove that $\dimh \left(\DNQ \backslash \RNk{Q}{1}\right)=1$, consider two basic sequences $P=(p_n)$ and $Q=(q_n)$ given by
\begin{align*}
(p_1,p_2,p_3,\cdots)&=(2,2,4,4,4,4,6,6,6,6,6,6,8,8,8,8,8,8,8,8,\cdots);\\
(q_1,q_2,q_3,\cdots)&=(4,16,16,36,36,36,64,64,64,64,\cdots).
\end{align*}
Define the sequences $(E_n)$ and $(F_n)$ by
\begin{align*}
(E_1,E_2,E_3,\cdots)&=(0, 1, 0, 2, 1, 3, 0, 3, 1, 4, 2, 5, 0, 4, 1, 5, 2, 6, 3, 7, \cdots);\\
(F_1,F_2,F_3,\cdots)&=(0, 0, 8, 0, 12, 24, 0, 16, 32, 48,\cdots).
\end{align*}
Let $x=\sum_{n=1}^\infty \frac {E_n} {p_1\cdots p_n}$ and $y=\sum_{n=1}^\infty \frac {F_n} {q_1\cdots q_n}$.  Clearly, $x \in \DN{P}$ and $y \in \DN{Q}$ by \reft{Salat}.  However, $y=0.00002000204000204060\cdots$ w.r.t. $P$, so $y \notin \DN{P}$.  Furthermore, note that
$$
x=0.2273(10)(17)4(13)(22)(31)\cdots \hbox{ w.r.t. }Q.
$$
So $T_{Q,n}(x)<1/2$ for all $n$ and $x \notin \DN{Q}$.  Thus, we have demonstrated an example of two basic sequences $P$ and $Q$ with $P \sim_2 Q$ where $\DN{P} \backslash \DN{Q} \neq \emptyset$, $\DN{Q} \backslash \DN{P} \neq \emptyset$, and $\DN{P} \neq \DN{Q}$.
It should be noted that these examples are in sharp contrast with a well known theorem of W. M. Schmidt \cite{SchmidtRelated}:

\begin{thrm}
We write $r \sim s$ if there exist integers $n, m$ with $r^n=s^m$.  If $r \sim s$, then any number normal to base $r$ is normal to base $s$.  If $r \nsim s$, then the set of numbers which are normal to base $r$ but not even simply normal to base $s$ has the power of the continuum.
\end{thrm}

While there is no reason to expect uncountable intersections to preserve Hausdorff dimension, it is not immediately clear that there are not numbers that are $Q$-distribution normal for every basic sequence $Q$ that is  infinite in limit.  If this were the case then it might be possible that \reft{mainthrm} could be extended to arbitrary uncountable intersections.
%We remark that \reft{mainthrm} can not be improved to arbitrary countable intersections:
\begin{thrm}\labt{mainunthrm}
There is an uncountable family of basic sequences $(Q_j)_{j \in J}$ that are infinite in limit such that
$$
\bigcap_{j \in J} \DN{Q_j} \backslash \RNk{Q_j}{1} = \emptyset.
$$
\end{thrm}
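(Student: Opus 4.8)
The plan is to reduce the statement to a purely measure-theoretic and equidistribution-theoretic construction, exploiting the trivial inclusion $\DN{Q_j} \backslash \RNk{Q_j}{1} \subseteq \DN{Q_j}$. Thus it suffices to produce an uncountable family $(Q_j)_{j \in J}$ of infinite-in-limit basic sequences with $\bigcap_{j \in J} \DN{Q_j} = \emptyset$; once no $x$ is simultaneously distribution normal with respect to every $Q_j$, that same $x$ cannot lie in every $\DN{Q_j} \backslash \RNk{Q_j}{1}$. First I would dispose of rational $x$: for $x = p/r$ and any basic sequence $Q$, the quantity $T_{Q,n}(x) = \{(q_1 \cdots q_n) x\}$ takes values only among the finitely many multiples of $1/r$, so $(T_{Q,n}(x))$ can never be uniformly distributed mod $1$ and $x \notin \DN{Q}$. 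Hence every rational is automatically excluded by \emph{every} member of the family.

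The heart of the argument is a greedy construction that, to each irrational $x \in (0,1)$, associates an infinite-in-limit basic sequence $Q(x)$ with $x \notin \DN{Q(x)}$. Writing $t_n = T_{Q,n}(x) = \{(q_1 \cdots q_n)x\}$ and $t_0 = x$, I would choose $q_n$ inductively so that $t_n \in [0,1/2)$ for all $n$. Given $t_{n-1}$, which is irrational because $q_1 \cdots q_{n-1}$ is a nonzero integer and $x$ is irrational, Weyl's theorem guarantees that $(\{q\,t_{n-1}\})_{q \ge 1}$ is uniformly distributed mod $1$; in particular there are arbitrarily large integers $q$ with $\{q\,t_{n-1}\} < 1/2$. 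I select such a $q_n$ with the additional requirement $q_n > \max(n, q_{n-1})$, which forces $q_n \to \infty$, so $Q(x)$ is infinite in limit. By construction $t_n = \{q_n t_{n-1}\} < 1/2$ for every $n$, so the orbit $(T_{Q,n}(x))$ avoids the interval $[1/2,1)$ entirely and is therefore not uniformly distributed mod $1$; hence $x \notin \DN{Q(x)}$.

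Finally I would set $J = \{\, Q(x) : x \in (0,1) \,\}$ and verify the two required properties. By construction each $x$, rational or irrational, fails to be distribution normal for some member of the family, so $\bigcap_{j \in J} \DN{Q_j} = \emptyset$, and the desired intersection is empty as well. The one point that requires a genuine idea is that $J$ is \emph{uncountable}: this is not arranged by hand but is forced, since each $\DN{Q}$ has full Lebesgue measure, so if $J$ were countable then $\bigcup_{j \in J} \big([0,1) \backslash \DN{Q_j}\big)$ would be a countable union of null sets, hence null, yet it equals all of $[0,1)$ --- a contradiction. This same observation explains why an uncountable family is unavoidable: any countable intersection of the full-measure sets $\DN{Q_j}$ is itself of full measure and in particular nonempty. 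The main obstacle is thus not the uncountability, which comes for free, but the selection step --- confirming that one can always pick an admissible large $q_n$ --- which is exactly where the equidistribution of $(\{q\,t_{n-1}\})_q$ enters.
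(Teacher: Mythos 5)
Your proposal is correct and follows essentially the same route as the paper, which proves the theorem by a trivial modification of Laffer's Theorem 1.1.4: for each real $x$ one diagonally constructs an infinite-in-limit basic sequence $Q(x)$ whose orbit $(T_{Q(x),n}(x))$ avoids a fixed interval, and the uncountability of the resulting family is forced by the full Lebesgue measure of each $\DN{Q}$. Your write-up has the minor virtue of being self-contained where the paper defers to \cite{Laffer}.
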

\reft{mainunthrm} can be proven with only a trivial modification of the proof of Theorem 1.1.4 in the dissertation of P. Laffer \cite{Laffer}. P. Laffer's Theorem 1.1.4 shows that no number is $Q$-distribution normal for all basic sequences $Q$.  It should be noted that every irrational number is $Q$-distribution normal for uncountably many basic sequences $Q$ and not $Q$-distribution normal for uncountably many basic sequences $Q$.  P. Laffer \cite{Laffer} also provides further refinements of these statements.

Moreover, we will also show the following for $Q$ that are infinite in limit:
\begin{enumerate}
\item The sets $\DN{Q}^{\mathsf{c}}$ and $\RNk{Q}{2}^{\mathsf{c}}$ are $\alpha$-winning sets (in the sense of Schmidt's game) for every $\alpha$ in $(0,1/2)$.
\item $\DN{Q}$ and $\RNk{Q}{1}$ are sets of the first category. 
\end{enumerate}

\section{Properties of $\RNk{Q}{k}$, and $\DNQ$}
\subsection{Winning sets}
In \cite{SchmidtGames}, W. Schmidt proposed the following game between two players: Alice and Bob.  Let $\al \in (0,1)$, $\be \in (0,1)$, $S \subseteq \mathbb{R}$, and let $\rho(I)$ denote the radius of a set $I$.  Bob first picks any closed interval $B_1\subsetneq \mathbb{R}$.  Then Alice picks a closed interval $A_1 \subsetneq B_1$ such that $\rho(A_1)=\al \rho( B_1)$.  Bob then picks a closed interval $B_2 \subsetneq A_1$ with $\rho(B_2)=\be \rho(A_1)$.  After this, Alice picks a closed interval $A_2 \subsetneq B_2$ such that $\rho(A_2)=\al \rho(B_2)$, and so on.  We say that the set $S$ is {\it $(\alpha,\beta)$-winning} if Alice can play so that
\begin{equation}
\bigcap_{n=1}^{\infty} B_n \subsetneq S.
\end{equation}
The set $S$ is {\it $(\alpha,\beta)$-losing} if it is not $(\alpha,\beta)$-winning.  $S$ is {\it $\alpha$-winning} if it is $(\alpha,\beta)$-winning for all $0< \beta < 1$.
Winning sets satisfy the following properties:%\footnote{See \cite{SchmidtGames}.}%See \cite{Dani} and \cite{SchmidtGames}.}
\begin{enumerate}
\item If $S$ is an $\al$-winning set, then the Hausdorff dimension of $S$ is $1$.
\item The intersection of countably many $\al$-winning sets is $\al$-winning.
\item Bi-Lipshitz homeomorphisms of $\mathbb{R}$ preserve winning sets.
\end{enumerate}
We write $\windim S$ to be the supremum of all $\al$ such that $S$ is $\al$-winning.  N. G. Moshchevitin \cite{Moshchevitinsublacunary} proved
\begin{thrm}\labt{lacunarywinning}
Let $(t_n)$ be a sequence of positive numbers and
$$
\forall \epsilon>0 \ \exists N_0 \ \forall n \geq N_0 :\frac {t_{n+1}} {t_n} \geq 1+\frac {1} {n^\epsilon}.
$$
Then for every number $\delta>0$ the set
$$
\mathscr{A}_\delta=\left\{x \in \mathbb{R} : \exists c(x)>0 \  \forall n \in \mathbb{N} \  \|t_nx\| > \frac {c(x)} {n^\delta} \right\}
$$
is an $\alpha$-winning set for all $\alpha$ in $(0,1/2)$.  Thus, $\windim \mathscr{A}_\delta=1/2$.
\end{thrm}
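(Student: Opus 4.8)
The plan is to play Schmidt's game directly and exhibit a strategy for Alice. Fix $\al\in(0,1/2)$ and an arbitrary $\be\in(0,1)$; it suffices to show $\mathscr{A}_\delta$ is $(\al,\be)$-winning. Writing $\rho_m:=\rho(B_m)$, the game dynamics give $\rho_m=\rho_1(\al\be)^{m-1}$, so the scales decay geometrically by the fixed factor $\al\be$, while at stage $m$ Alice may place her interval $A_m$ (of radius $\al\rho_m$) anywhere inside $B_m$, i.e. its center ranges over an interval of length $2(1-\al)\rho_m$. The decisive structural point is the quantifier ``$\exists\,c(x)>0$'': \emph{after} seeing Bob's opening interval $B_1$, Alice commits to a single small constant $c$, and for the rest of the game she only needs to ensure that for every $n$ there is \emph{some} stage $m$ with $A_m\cap F_n=\emptyset$, where $F_n:=\{x:\|t_nx\|\le c/n^\delta\}$. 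Since the limit point satisfies $x\in\bigcap_k B_k\subset A_m$, this guarantees $\|t_nx\|>c/n^\delta$ for all $n$, hence $x\in\mathscr{A}_\delta$. Finally, since the growth hypothesis holds for \emph{every} $\e>0$, I first fix once and for all some $\e$ with $0<\e<\delta$, so that $t_{n+1}/t_n\ge 1+n^{-\e}$ for all $n\ge N_0$; this choice is what makes the main estimate close.

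Each $F_n$ is a $1/t_n$-periodic union of intervals of length $2c/(t_nn^\delta)$, so its relative density is $2c/n^\delta$, independent of $t_n$. The natural idea is scale matching: one ``handles'' index $n$ at the stage $m(n)$ where $\rho_m$ first drops to the spacing $1/t_n$, because there $B_m$ meets only $O(1)$ translates of $F_n$, each of tiny relative width $\sim c/n^\delta$, which a single interval $A_m$ of relative width $\al<1/2$ can dodge. It is essential here that the quantifier on $c$ is available: for a \emph{fixed} threshold the set $\{x:\|t_nx\|>\eta\}$ is not winning at all, since Bob may open with $B_1$ lying entirely inside a forbidden interval; choosing $c=c(x)$ small (depending on $B_1$) shrinks the finitely many resonances that are already dangerous at the initial scale so that $B_1$ escapes them, and only then does the matching argument begin.

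The main obstacle is that in the sublacunary regime many indices must be handled at essentially the same scale. Indeed $\log t_n\asymp n^{1-\e}$, so $m(n)\asymp n^{1-\e}$, and the number of indices $n$ whose resonant scale $1/t_n$ falls in a fixed multiplicative band around $\rho_m$ grows like $n^{\e}\to\infty$. Thus at stage $m$ Alice faces $\sim n^{\e}$ distinct grids simultaneously. Their total relative measure is only $\sum 2c/n^\delta\sim c\,n^{\e-\delta}\to0$ (this is exactly where $\e<\delta$ is used), yet a \emph{bounded} total measure distributed among $\sim n^{\e}$ separate intervals can fragment $B_m$ into gaps all of relative width $\lesssim n^{-\e}<\al$, so a naive one-stage dodge is impossible; and deferring indices to finer stages merely builds a backlog whose fixed-length forbidden intervals eventually have relative width $\ge1$ and swallow the shrinking $B_m$. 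The real work — and the heart of Moshchevitin's argument — is a counting/strategy lemma showing that Alice can nonetheless survive: one processes the band over a \emph{block} of consecutive stages through a nested family of ``safe'' subintervals, at each step discarding only a fraction of measure comparable to $c/n^\delta$, and the telescoped loss $\sum_n c/n^\delta$ is kept below the available slack $\tfrac12-\al$ precisely because the number of indices per band, $\sim n^{\e}$, is overwhelmed by the gain $n^{-\delta}$ from the threshold whenever $\e<\delta$. Carrying out this block construction, with the bookkeeping that no index is deferred past the scale at which its forbidden interval would exceed relative width $1-2\al$, is the step I expect to be by far the most delicate.

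For the reverse bound $\windim\mathscr{A}_\delta\le 1/2$, I would give Bob a winning strategy whenever $\al>1/2$. Take the point $p=0$, which lies in the complement of $\mathscr{A}_\delta$ since $\|t_n\cdot0\|=0$ for all $n$. A short computation shows that if $\al\ge 1/2$ then every admissible $A_m\subset B_m$ must contain the center of $B_m$, so by keeping $0$ at the center of each $B_m$ Bob forces $0\in A_m$ for all $m$; choosing $\be$ with $\be\ge 2-1/\al$ (possible exactly because $\al>1/2$) he can then select $B_{m+1}\subset A_m$ again centered at $0$. Hence $0\in\bigcap_m B_m$, so $\bigcap_m B_m=\{0\}\not\subset\mathscr{A}_\delta$, and $\mathscr{A}_\delta$ fails to be $(\al,\be)$-winning for this $\be$, thus not $\al$-winning. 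Combining this with the winning property for every $\al<1/2$ yields $\windim\mathscr{A}_\delta=1/2$.
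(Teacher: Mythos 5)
First, note that the paper does not actually prove this statement: it is quoted from Moshchevitin \cite{Moshchevitinsublacunary} and used as a black box in \refc{DNwinning} and \reft{RNwinning}, so there is no in-paper proof to measure yours against. Judged on its own terms, your write-up is an accurate and well-motivated \emph{plan} rather than a proof. The reductions you carry out are sound: fixing $\al\in(0,1/2)$ and $\be$, exploiting the quantifier $\exists\, c(x)$ to choose $c$ after seeing $B_1$, describing each $F_n=\{x:\|t_nx\|\le c/n^\delta\}$ as a $1/t_n$-periodic family of intervals of relative density $2c/n^\delta$, and correctly locating the genuine difficulty in the sublacunary regime, where many grids share the resonant scale $\rho_m\asymp 1/t_n$ and a one-stage dodge is impossible because their union can shatter $B_m$ into gaps of relative width smaller than $\al$. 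But the statement you then need --- that Alice can process such a band over a block of consecutive stages, losing only a fraction of measure comparable to $c/n^\delta$ per index, with no index deferred past the scale at which its forbidden interval reaches relative width $1-2\al$ --- \emph{is} the theorem. You assert that such a ``counting/strategy lemma'' exists and explicitly defer it as ``the most delicate'' step, so the core of the argument is missing. Until that lemma is stated precisely and proved, what you have is a correct diagnosis of why the naive strategy fails, not a demonstration that some strategy succeeds.

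Two smaller points. The growth hypothesis only yields the lower bound $\log t_n\gtrsim n^{1-\e}$; there is no matching upper bound, so ``$m(n)\asymp n^{1-\e}$'' and ``the number of indices per band grows like $n^{\e}$'' should be phrased as upper estimates on the congestion (fast growth makes the problem easier, and is not excluded by the hypothesis). In the upper-bound argument the inequality is reversed: Bob's recentered move $B_{m+1}\subset A_m$ of radius $\be\al\rho(B_m)$ about the center of $B_m$ requires $\be\al\le 2\al-1$, i.e.\ $\be\le 2-1/\al$, not $\be\ge 2-1/\al$. With that sign fixed, the argument that no set omitting a single point is $\al$-winning for $\al>1/2$ is standard and correct, and it supplies the easy half of $\windim \mathscr{A}_\delta=1/2$.
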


\begin{cor}\labc{DNwinning}
For every basic sequence $Q$, $\windim \DN{Q}^{\mathsf{c}}=1/2$.  Moreover, $\DN{Q} \backslash \RNk{Q}{1}$ is not an $\alpha$-winning set for any $\alpha$.
\end{cor}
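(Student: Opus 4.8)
The plan is to prove the two assertions separately, driving the first with \reft{lacunarywinning} and the second with the formal closure properties of winning sets. Set $t_n=\prod_{j=1}^{n}q_j$, so that $\tqn{x}=t_nx\bmod 1$ and, directly from the definition, $x\in\DN{Q}^{\mathsf{c}}$ precisely when $(t_nx)_{n\ge 0}$ fails to be uniformly distributed mod $1$. Because $q_{n+1}\ge 2$ for every $n$, we have $t_{n+1}/t_n=q_{n+1}\ge 2\ge 1+n^{-\epsilon}$ for all $n\ge 1$ and all $\epsilon>0$, so $(t_n)$ satisfies the hypothesis of \reft{lacunarywinning}; hence, for this choice of $(t_n)$, each $\mathscr{A}_\delta$ is $\alpha$-winning for every $\alpha\in(0,1/2)$ with $\windim \mathscr{A}_\delta=1/2$. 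I obtain $\windim\DN{Q}^{\mathsf{c}}=1/2$ from a matching lower and upper bound.

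For $\windim\DN{Q}^{\mathsf{c}}\ge 1/2$, since any superset of an $\alpha$-winning set is $\alpha$-winning, it suffices to place inside $\DN{Q}^{\mathsf{c}}$ a set that is $\alpha$-winning for all $\alpha<1/2$. The candidate suggested by \reft{lacunarywinning} is $\mathscr{A}_\delta$, and the point to verify is the inclusion $\mathscr{A}_\delta\subseteq\DN{Q}^{\mathsf{c}}$: for $x\in\mathscr{A}_\delta$ one has $\|t_nx\|>c(x)n^{-\delta}$, so for $n\le N$ no fractional part $\{t_nx\}$ lies in $[0,c(x)N^{-\delta})$, and the orbit makes far fewer returns near $0$ than equidistribution demands. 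Turning this into a contradiction against a \emph{merely} uniformly distributed orbit is delicate, because the discrepancy of such an orbit may decay more slowly than $N^{-\delta}$; the clean and already sufficient case is $E=\{x:\liminf_n\|t_nx\|>0\}\subseteq\mathscr{A}_\delta$, where the $\{t_nx\}$ avoid a \emph{fixed} neighborhood of $0$, the density of returns to that neighborhood is $0$, and non-equidistribution is immediate, so $E\subseteq\DN{Q}^{\mathsf{c}}$. That $E$ is $\alpha$-winning for every $\alpha<1/2$ is a winning property of the lacunary sequence $(t_n)$ of exactly the type furnished by \reft{lacunarywinning} (and by Schmidt's original treatment of badly approximable sets), which yields the lower bound.

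For $\windim\DN{Q}^{\mathsf{c}}\le 1/2$, recall that the set of $Q$-distribution normal numbers has full Lebesgue measure, so $\DN{Q}^{\mathsf{c}}$ is Lebesgue-null; and a Lebesgue-null set cannot be $\alpha$-winning for any $\alpha>1/2$, which is the classical $1/2$-threshold in Schmidt's game (the same phenomenon that caps $\windim \mathscr{A}_\delta$ at $1/2$). Combined with the lower bound this gives $\windim\DN{Q}^{\mathsf{c}}=1/2$.

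For the second assertion, observe that $\DN{Q}\setminus\RNk{Q}{1}\subseteq\DN{Q}$ is disjoint from $\DN{Q}^{\mathsf{c}}$. Suppose it were $\alpha$-winning for some $\alpha$. I would first invoke the (standard) downward monotonicity of the game: an $\alpha$-winning set is $\alpha'$-winning for every $\alpha'<\alpha$. The transfer is a shadow-strategy argument in which Alice runs her $\alpha$-strategy on a parallel game, nesting her genuine $\alpha'$-interval inside the shadow $\alpha$-interval; Bob's genuine move then reads as a legal shadow move of ratio $\beta\alpha'/\alpha<1$, and since the set is $\alpha$-winning for \emph{all} shadow ratios, the common nested intersection lands in it. Taking $\alpha'<\min(\alpha,1/2)$ makes $\DN{Q}\setminus\RNk{Q}{1}$ an $\alpha'$-winning set, while $\DN{Q}^{\mathsf{c}}$ is $\alpha'$-winning by the first part; by the countable-intersection property their intersection is then $\alpha'$-winning, yet it is empty, and the empty set is never winning since the nested $B_n$ always have nonempty intersection. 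This contradiction shows $\DN{Q}\setminus\RNk{Q}{1}$ is not $\alpha$-winning for any $\alpha$. I expect the genuine obstacle to be the lower bound of the first assertion — converting the badly-approximable-type condition on $(t_nx)$ into an honest failure of uniform distribution (equivalently, securing the winning subset of $\DN{Q}^{\mathsf{c}}$) — together with the matching upper bound from the $1/2$-threshold for null sets; the lacunarity check, the superset and intersection bookkeeping, and the monotonicity transfer are then routine.
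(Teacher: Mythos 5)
Your architecture is the same as the paper's: set $t_n=q_1q_2\cdots q_n$, use \reft{lacunarywinning} to produce an $\alpha$-winning subset of $\DN{Q}^{\mathsf{c}}$ for all $\alpha<1/2$, cap $\windim\DN{Q}^{\mathsf{c}}$ at $1/2$ by nullity of $\DN{Q}^{\mathsf{c}}$, and then get the second assertion from the disjointness of $\DN{Q}\setminus\RNk{Q}{1}$ and $\DN{Q}^{\mathsf{c}}$ together with closure of $\alpha$-winning sets under intersection and the fact that the nested intervals $B_n$ always meet. Your extra care in invoking monotonicity in $\alpha$ (so that both sets are winning for a \emph{common} $\alpha'<1/2$) addresses a point the paper leaves implicit but which is genuinely needed for $\alpha\ge 1/2$. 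You also put your finger on the one step that deserves scrutiny: the paper asserts ``clearly $\mathscr{A}_\delta\subset\DN{Q}^{\mathsf{c}}$,'' and you are right that for $\delta>0$ this is not clear --- the condition $\|t_nx\|>c(x)n^{-\delta}$ only forces the star discrepancy of $(t_nx)_{n\le N}$ to be of order at least $N^{-\delta}$, which is entirely compatible with uniform distribution, so membership in $\mathscr{A}_\delta$ does not by itself preclude membership in $\DN{Q}$. Your instinct to retreat to $E=\{x:\liminf_n\|t_nx\|>0\}$, whose orbit avoids a fixed neighborhood of $0$ and therefore visibly fails equidistribution, is the right repair.

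The gap is in how you certify that $E$ is winning. You appeal to \reft{lacunarywinning}, but $E$ is a \emph{proper subset} of $\mathscr{A}_\delta$, and the winning property is not inherited by subsets, so the quoted theorem gives no information about $E$. What you actually need is the classical statement that for a lacunary sequence (here $t_{n+1}/t_n=q_{n+1}\ge 2$) the set $\{x:\inf_n\|t_nx\|>0\}$ is $\alpha$-winning for every $\alpha\in(0,1/2)$; this is true and provable by Schmidt's original strategy (once $\rho(B_n)$ is small compared with $1/t_m$, Alice can steer clear of a fixed-proportion neighborhood of $t_m^{-1}\mathbb{Z}$), but it is a different theorem from the one stated in the paper and must be proved or cited. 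Until that is supplied, the lower bound $\windim\DN{Q}^{\mathsf{c}}\ge 1/2$ --- and with it everything downstream --- is asserted rather than established. The remaining pieces (the lacunarity check, $E\subseteq\mathscr{A}_\delta\,$, the nullity upper bound, and the intersection argument for the second assertion) are fine.
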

\begin{proof}
Let $t_n=q_1q_2 \cdots q_n$.  Clearly, for all $\delta>0$, $\mathscr{A}_\delta \subsetneq \DN{Q}^{\mathsf{c}}$.  Thus, $\windim \DN{Q}^{\mathsf{c}}=1/2$.  But $\DN{Q}^{\mathsf{c}} \cap \DN{Q}=\emptyset$ and the property of being $\alpha$-winning is preserved by countable intersections, so $\DN{Q}$ and $\DN{Q} \backslash \RNk{Q}{1}$ are not $\alpha$-winning sets for any $\alpha$.
\end{proof}

\begin{lem}\labl{Qinfblocks}
If $Q$ is infinite in limit, $x \in \RNk{Q}{2}$, and $t$ is a non-negative integer, then
$$\lim_{n \to \infty} N_{n}^Q((t),x)=\infty.$$
\end{lem}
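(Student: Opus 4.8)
The plan is to bound $N_n^Q((t),x)$ from below by the number of distinct one-digit right-extensions of $t$ that actually appear, and to show via ratio normality that there are infinitely many such extensions. First I would record the elementary observation that each position $j\le n$ at which the digit $t$ occurs is followed by a uniquely determined digit $E_{j+1}=s$, so that $N_n^Q((t),x)=\sum_{s\ge 0} N_n^Q((t,s),x)$. Thus it suffices to exhibit infinitely many values of $s$ for which the length-2 block $(t,s)$ occurs at least once in the expansion of $x$.

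The key step is the claim that, since $x\in\RNk{Q}{2}$, every length-2 block occurs at least once. Because $x$ has infinitely many digits, some block — say $(E_1,E_2)$ — occurs, so $N_n^Q((E_1,E_2),x)\ge 1$ for all $n\ge 1$. If some length-2 block $(t,s_0)$ occurred zero times, then $N_n^Q((t,s_0),x)=0$ for every $n$, and the ratio $N_n^Q((E_1,E_2),x)/N_n^Q((t,s_0),x)$ appearing in the definition of $\RNk{Q}{2}$ could not converge to $1$; this contradiction forces every length-2 block to occur. Here is where the hypothesis that $Q$ is infinite in limit enters: since $q_n\to\infty$, for each fixed $s$ there are positions $j$ with $q_j>t$ and $q_{j+1}>s$, so $(t,s)$ is an admissible block and the preceding argument applies to it for every $s\ge 0$.

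Combining these, for each $s\ge 0$ choose a position $j_s$ with $E_{j_s}=t$ and $E_{j_s+1}=s$; distinct values of $s$ give distinct positions $j_s$, so $\{j_s:s\ge 0\}$ is an infinite set of positions carrying the digit $t$. Hence $N_n^Q((t),x)\ge \#\{s:j_s\le n\}\to\infty$, which is the assertion. The only real obstacle is resisting the temptation to argue that ``some block occurs infinitely often, hence by ratio normality all do'': that pigeonhole shortcut needs a bounded digit alphabet, which fails precisely because $Q$ is infinite in limit. The argument above sidesteps this by extracting the divergence from the infinitude of distinct right-extensions $(t,s)$ rather than from the repetition of any single block.
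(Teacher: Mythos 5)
Your argument is correct and is essentially the paper's own proof: both use $Q$-ratio normality of order $2$ to conclude that for every digit $s$ the block $(t,s)$ occurs at least once, and then observe that distinct right-extensions $s$ yield distinct occurrences of the digit $t$, forcing $N_n^Q((t),x)\to\infty$. Your version merely spells out two points the paper leaves implicit, namely that some length-$2$ block certainly occurs (so a vanishing denominator would contradict the ratio condition) and that the hypothesis that $Q$ is infinite in limit is what makes every block $(t,s)$ admissible.
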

\begin{proof}
Since $Q$ is infinite in limit and $x \in \RNk{Q}{2}$, for all $i,j \geq 0$, we have
$$
\lim_{n \to \infty} \frac {N_{n}^Q((t,i),x)} {N_{n}^Q((t,j),x)}=1.
$$
So, for all $j$ there is an $n$ such that $N_{n}^Q((t,j),x) \geq 1$.  Since there are infinitely many choices for $j$, the lemma follows.
\end{proof}

Let $\mathscr{FZ}(Q)$ be the set of real numbers whose $Q$-Cantor series expansion contains at most finitely many copies of the digit $0$.

\begin{cor}\labc{pqnn}
If $Q$ is infinite in limit, then $\mathscr{FZ}(Q) \subsetneq \RNk{Q}{2}^{\mathsf{c}}$.
% is contained in the set of real numbers that are not $Q$-ratio normal of order $2$.
\end{cor}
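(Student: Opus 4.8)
The plan is to establish the containment by proving its contrapositive through a direct application of \refl{Qinfblocks} with the specific choice $t=0$. By definition, $\mathscr{FZ}(Q)$ is the set of $x$ whose $Q$-Cantor series expansion contains at most finitely many digits equal to $0$; for such an $x$ the length-one block $(0)$ occurs only boundedly often, so its counting function cannot diverge. This is exactly the hypothesis that \refl{Qinfblocks} rules out for ratio normal numbers of order $2$.

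Concretely, I would fix an arbitrary $x \in \mathscr{FZ}(Q)$ and choose $N$ so that $E_n \neq 0$ for all $n \geq N$. Then $N_n^Q((0),x) \leq N$ for every $n$, and in particular $\lim_{n \to \infty} N_n^Q((0),x) \neq \infty$. Now \refl{Qinfblocks} asserts that every $x \in \RNk{Q}{2}$ satisfies $\lim_{n \to \infty} N_n^Q((t),x)=\infty$ for each non-negative integer $t$. Taking $t=0$ and reading the lemma contrapositively, the failure of this limit forces $x \notin \RNk{Q}{2}$, that is, $x \in \RNk{Q}{2}^{\mathsf{c}}$. Since $x$ was arbitrary, this yields $\mathscr{FZ}(Q) \subset \RNk{Q}{2}^{\mathsf{c}}$.

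There is no genuine obstacle here, since the substantive content is already packaged in \refl{Qinfblocks} (which itself uses that $Q$ is infinite in limit to guarantee infinitely many admissible second coordinates). The only point requiring any care is to observe that the defining condition of $\mathscr{FZ}(Q)$ refers to the single digit $0$ viewed as a block of length one, so that the lemma applies verbatim with $t=0$ and no longer blocks need to be considered.
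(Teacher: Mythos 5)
Your proof is correct and is exactly the argument the paper intends: the corollary is stated without proof as an immediate consequence of \refl{Qinfblocks}, and your application of that lemma with $t=0$ in contrapositive form is the same route. Nothing further is needed.
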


%\begin{proof}
%If $x \in \mathscr{FZ}(Q)$, then the digit $0$ occurs finitely often.  If there is another digit that occurs a different number of %times, then $x$ is not even simply $Q$-ratio normal.  If every digit in the $Q$-Cantor series expansion of $x$ occurs the same %number of times, then $x$ is not $Q$-ratio normal of order $2$ by \refl{Qinfblocks}.
%\end{proof}

\begin{thrm}\labt{RNwinning}
If $Q$ is infinite in limit, then $\windim \RNk{Q}{2}^{\mathsf{c}}=1/2$.
%the set of numbers that is not $Q$-ratio normal of order $2$ is $\alpha$-winning for all $\alpha$ in $(0,1/2)$.  
Moreover, if $Q$ is $1$-divergent, then $\windim \Nk{Q}{1}^{\mathsf{c}}=1/2$.
%the set of real numbers that are not simply $Q$-normal is $\alpha$-winning for all $\alpha$ in $(0,1/2)$.
\end{thrm}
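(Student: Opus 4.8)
The plan is to prove both assertions by the two-sided estimate $\windim \ge 1/2$ and $\windim \le 1/2$, after reducing each combinatorial condition to a single geometric one: avoidance of the leftmost sub-cylinder at every sufficiently high level. First I would record the reductions. For the first assertion, \refc{pqnn} gives $\mathscr{FZ}(Q)\subseteq\RNk{Q}{2}^{\mathsf c}$, so it suffices to control numbers whose $Q$-expansion has only finitely many zero digits. For the second assertion, when $Q$ is $1$-divergent we have $Q_n^{(1)}\to\infty$, so any $x$ whose digit $0$ occurs only finitely often satisfies $N_n^Q((0),x)/Q_n^{(1)}\to 0\neq 1$ and hence lies in $\Nk{Q}{1}^{\mathsf c}$. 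Thus in both cases it is enough to understand the winning dimension of the set $\mathscr{Z}$ of numbers whose $Q$-Cantor series expansion eventually avoids the digit $0$.

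For the lower bound I would play Schmidt's game directly, for an arbitrary fixed $\alpha<1/2$ and $\beta\in(0,1)$. The event $E_m=0$ is precisely the event that the limit point lies in the leftmost sub-cylinder of its level-$(m-1)$ cylinder, a sub-interval of relative length $1/q_m$. Since $Q$ is infinite in limit, $1/q_m\to 0$, so for all large $m$ this forbidden piece sits at the boundary of the cylinder and occupies an arbitrarily small fraction of it. Alice's strategy is, each time the play interval first lies inside a single level-$(m-1)$ cylinder, to translate her interval to the right so as to clear that cylinder's leftmost sub-interval; the room available to her at such a step is $2(1-\alpha)$ times the current radius, and a pursuit estimate shows that, because the forbidden fraction tends to $0$, this translation can be sustained consistently across all sufficiently high levels exactly when $\alpha<1/2$. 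The intersection point then has $E_m\neq 0$ for all large $m$, i.e. lies in $\mathscr Z$. As this produces an $\alpha$-winning subset for every $\alpha<1/2$, and winning is preserved under passage to supersets, we obtain $\windim\RNk{Q}{2}^{\mathsf c}\ge 1/2$ and, under $1$-divergence, $\windim\Nk{Q}{1}^{\mathsf c}\ge 1/2$.

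For the matching upper bound I would argue, in the spirit of \refc{DNwinning} and \reft{lacunarywinning}, that no such set is $\alpha$-winning once $\alpha>1/2$. Here Bob — who, when $\alpha>1/2$, may place each of his intervals anywhere inside Alice's, while Alice's interval is forced to retain the midpoint of Bob's previous interval — steers the play so that the digit $0$, and indeed every length-$2$ block, is produced with the correct limiting frequency, forcing the limit point into $\RNk{Q}{2}$ (resp. $\Nk{Q}{1}$). I expect this forcing direction to be the main obstacle: whereas in the lower bound Alice need only dodge a shrinking boundary region, Bob must manufacture an entire normality statement in the limit, so the argument requires scheduling his corrections against Alice's moves and exploiting, for $\alpha>1/2$, the constraint that Alice's interval always contains the centre of Bob's, which caps how far she can bias the digit statistics. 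Combining the two bounds yields $\windim\RNk{Q}{2}^{\mathsf c}=\windim\Nk{Q}{1}^{\mathsf c}=1/2$.
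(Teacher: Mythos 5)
Your reduction is exactly the paper's: both assertions are pulled back to the single set $\mathscr{FZ}(Q)$ of numbers with only finitely many zero digits, via \refc{pqnn} for $\RNk{Q}{2}^{\mathsf{c}}$ and the frequency observation for $\Nk{Q}{1}^{\mathsf{c}}$ (the paper states the latter as the contrapositive). Where you diverge is in how you get $\windim \mathscr{FZ}(Q)\ge 1/2$: the paper simply applies \reft{lacunarywinning} to $t_n=q_1q_2\cdots q_n$ (a zero digit $E_m$ forces $\{t_{m-1}x\}<1/q_m$, so the winning set $\mathscr{A}_\delta$ lands inside $\mathscr{FZ}(Q)$ and winning transfers to supersets). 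Your direct game argument is in the right spirit, but its key step is asserted rather than proved: ``a pursuit estimate shows that \dots this translation can be sustained consistently across all sufficiently high levels'' hides the two genuine difficulties, namely that $B_n$ need never lie inside a single level-$(m-1)$ cylinder before the digit $E_m$ is already constrained (the play interval can straddle a cylinder boundary for many moves), and that several consecutive digit positions can be resolved between two successive moves when the $q_m$ are large, so Alice may have to dodge several forbidden sub-intervals at once. These can be handled (the forbidden fraction $1/q_m\to 0$ and the number of levels per move is at most about $\log_2(1/\al\be)$), but that is the entire content of the lower bound and it is missing from your sketch.

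The more serious gap is your upper bound. You propose to have Bob force the limit point to realize the correct limiting frequency of every length-$2$ block, you concede that this is ``the main obstacle,'' and you do not carry it out; as written this half of the proof does not exist. It is also far harder than what is needed. The standard fact from Schmidt's original paper is that if $2\al>1+\al\be$ --- which holds for every $\al>1/2$ once $\be$ is small enough --- then Bob can steer $\bigcap_n B_n$ to any prescribed point of a suitable subinterval of $B_1$, so the only $(\al,\be)$-winning set is $\mathbb{R}$ itself. Hence no proper subset of $\mathbb{R}$ is $\al$-winning for any $\al>1/2$, and the upper bound follows from the single soft fact that $\RNk{Q}{2}$ and $\Nk{Q}{1}$ are non-empty: Bob forces the intersection point to be one fixed element of them, with no digit-frequency bookkeeping. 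I would replace your third paragraph by this observation, and either complete the pursuit estimate in your second paragraph or, as the paper does, quote \reft{lacunarywinning} directly.
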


\begin{proof}
We note that $\windim \mathscr{FZ}(Q)=1/2$ by \reft{lacunarywinning}.
The first conclusion follows directly from this and \refc{pqnn}. If $Q$ is $1$-divergent and $x \in \Nk{Q}{1}$, then every digit occurs infinitely often in the $Q$-Cantor series expansion of $x$. So, $\mathscr{FZ}(Q) \subsetneq \Nk{Q}{1}^{\mathsf{c}}$ and $\windim \Nk{Q}{1}^{\mathsf{c}}=1/2$.
\end{proof}

It should be noted that \reft{RNwinning} is in some ways stronger than the corresponding result for $b$-ary expansions.  The original proof due to W. Schmidt that the set of numbers not normal in base $b$ is $1/2$-winning heavily uses the fact that a real number $x$ is normal in base $b$ if and only if $x$ is simply normal in base $b^k$ for all $k$.  In fact, the set of numbers not normal of order $2$ in base $b$ is not an $\alpha$-winning set for any $\al$.  The reasoning used in the proof of \reft{RNwinning} and in the preceeding lemmas only works because $Q$ is infinite in limit.
% CHECK ON THIS

\subsection{$\DNQ$, $\RNk{Q}{k}$, and $\Nk{Q}{k}$ are sets of the first category}

Given a sequence $Z=(z_1,\ldots,z_n)$ in $\mathbb{R}$ and 
$0<\gamma\le 1$, we define 
$$
A_n([0,\gamma),z):=|\{i ; 1\le i\le n\hbox{ and } \{z_i\}\in [0,\gamma)\}|.
$$

\begin{thrm}\labt{DNQfirst}
For any basic sequence $Q$, the set $\DN{Q}$ is of the first category.
\end{thrm}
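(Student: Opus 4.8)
The plan is to prove this by a Baire category argument: I will exhibit a dense $G_\delta$ set $R \subseteq [0,1)$ that is disjoint from $\DN{Q}$, so that $\DN{Q}$ is contained in the meagre set $[0,1)\setminus R$ (meagreness in $\mathbb{R}$ then follows from the $\mathbb{Z}$-periodicity of the functions $T_{Q,n}$). Writing $x=\floor{x}+\sum_{k}E_k/(q_1\cdots q_k)$, the key computation is that for $x\in[0,1)$,
$$
T_{Q,n}(x)=\sum_{k=n+1}^{\infty}\frac{E_k}{q_{n+1}q_{n+2}\cdots q_k},
$$
so each $T_{Q,n}(x)$ lies in $[0,1)$ and, up to a controllable tail, is governed by the digits $E_{n+1},E_{n+2},\ldots$. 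Since $Q$-distribution normality forces $(T_{Q,n}(x))_{n\ge 0}$ to be uniformly distributed mod $1$, it forces in particular that
$$
f_N(x):=\frac1N\,A\!\left([0,\tfrac12),(T_{Q,0}(x),\ldots,T_{Q,N-1}(x))\right)\longrightarrow \tfrac12 .
$$
Hence it suffices to build $R$ so that, for every $x\in R$, the Cesàro average $f_N(x)$ oscillates between values below $1/2$ and values above $1/2$.

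Fix rationals $0<a<\tfrac12<b<1$ and, for each $M\in\mathbb{N}$, set $U_M^{\mathrm{hi}}=\bigcup_{N\ge M}\{x:f_N(x)>b\}$ and $U_M^{\mathrm{lo}}=\bigcup_{N\ge M}\{x:f_N(x)<a\}$. I claim each contains a dense open set. Given any cylinder $I(E_1,\ldots,E_m)$, append $L$ zeros: with $E_{m+1}=\cdots=E_{m+L}=0$ and using only $q_j\ge 2$, the telescoping bound gives $T_{Q,n}(x)\le (q_{n+1}\cdots q_{m+L})^{-1}\le 2^{-(m+L-n)}<\tfrac12$ for all $m\le n\le m+L-2$, uniformly over the whole subcylinder; thus with $N=m+L-1$ at least $L-1$ of the first $N$ terms lie in $[0,\tfrac12)$, so $f_N(x)>b$ once $L$ is large, on the (open) interior of this subcylinder. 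Symmetrically, appending maximal digits $E_{m+j}=q_{m+j}-1$ gives $T_{Q,n}(x)=\frac{E_{n+1}+T_{Q,n+1}(x)}{q_{n+1}}\ge 1-q_{n+1}^{-1}\ge \tfrac12$ for all $m\le n\le m+L-1$, so none of these terms lie in $[0,\tfrac12)$ and $f_N(x)<a$ with $N=m+L$ for large $L$. Since every cylinder thus contains interior subcylinders lying in $U_M^{\mathrm{hi}}$ and in $U_M^{\mathrm{lo}}$, both sets are dense and contain dense open sets $V_M^{\mathrm{hi}},V_M^{\mathrm{lo}}$.

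By the Baire category theorem $R=\bigcap_{M}\left(V_M^{\mathrm{hi}}\cap V_M^{\mathrm{lo}}\right)$ is a dense $G_\delta$. For $x\in R$ we have $f_N(x)>b$ for arbitrarily large $N$ and $f_N(x)<a$ for arbitrarily large $N$, so $\limsup_N f_N(x)\ge b>\tfrac12>a\ge\liminf_N f_N(x)$; thus $f_N(x)\not\to \tfrac12$ and $x\notin\DN{Q}$. Therefore $\DN{Q}\subseteq [0,1)\setminus R$ is of the first category. The one point requiring care — and the main obstacle — is that $f_N$ is not continuous, since $[0,\tfrac12)$ is half-open and each $T_{Q,n}$ is a sawtooth, so the level sets $\{f_N>b\}$ need not be open; this is exactly why I do not work with $U_M^{\mathrm{hi}}$ directly but extract the genuinely open, digit-defined subcylinders, on which the relevant indicators are constant with a uniform margin (strictly below $\tfrac12$ in the zero construction, at least $\tfrac12$ in the maximal-digit construction). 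Since the argument uses nothing beyond $q_j\ge 2$, it applies to every basic sequence $Q$.
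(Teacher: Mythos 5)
Your proof is correct and is essentially the paper's argument in dual form: the paper covers $\DN{Q}$ by the sets $G_m=\bigcap_{n\ge m}\{x: \frac1n A([0,1/2),(T_{Q,0}(x),\ldots,T_{Q,n-1}(x)))<2/3\}$, which are precisely the complements of your $U_M^{\mathrm{hi}}$ (with $b=2/3$), and asserts they are nowhere dense. Your appending-zeros computation supplies the nowhere-density verification that the paper leaves implicit; the $U_M^{\mathrm{lo}}$ half is redundant, since $f_N(x)>b>1/2$ along a subsequence already contradicts distribution normality.
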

\begin{proof}
We define
\begin{equation}
G_m=\bigcap_{n=m}^{\infty} \left\{ x \in \mathbb{R} : \frac{A_n([0,1/2),T_{Q,n-1}(x))} {n}<2/3 \right\}
\end{equation}
and put $G=\bigcup_{m=1}^\infty G_m$. Clearly, $\DN{Q} \subsetneq G$ and each of the sets $G_m$ is nowhere dense, so $\DN{Q}$ is of the first category.
\end{proof}
We also note the following, which is proven similarly to \reft{DNQfirst}.
\begin{thrm}\labt{RNQfirst}
For any basic sequence $Q$ and positive integer $k$, the set $\RNk{Q}{k}$ is of the first category.  Since $\Nk{Q}{k} \subsetneq \RNk{Q}{k}$, $\Nk{Q}{k}$ is also of the first category.\footnote{$\Nk{Q}{k}$ could be empty.  See Proposition 5.1 in \cite{Mance4}.  It is proven in \cite{ppq1} that $\Nk{Q}{k} \subsetneq \RNk{Q}{k}$ for all $Q$ that are infinite in limit.}
\end{thrm}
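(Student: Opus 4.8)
The plan is to mimic the proof of \reft{DNQfirst}, replacing the digit‑distribution statistic by a ratio of two fixed block counts. Fix the two blocks of length $k$ given by $B_1=(0,\ldots,0)$ and $B_2=(1,\ldots,1)$; since every $q_n\ge 2$, both the digit $0$ and the digit $1$ are admissible at every index, so each of $B_1$ and $B_2$ can occur starting at any position. For $m\ge 1$ I would set
$$
G_m=\bigcap_{n=m}^{\infty}\left\{x\in[0,1) : N_n^Q(B_1,x)\le 2\,N_n^Q(B_2,x)\right\},
$$
and put $G=\bigcup_{m=1}^{\infty}G_m$. The theorem follows once we show $\RNk{Q}{k}\subset G$ and that each $G_m$ is nowhere dense.

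For the containment, suppose $x\in\RNk{Q}{k}$. Applying the defining limit to the pair $(B_1,B_2)$ gives $N_n^Q(B_1,x)/N_n^Q(B_2,x)\to 1$, so the ratio is eventually smaller than $2$; hence there is an $m$ with $N_n^Q(B_1,x)\le 2\,N_n^Q(B_2,x)$ for all $n\ge m$, i.e. $x\in G_m\subset G$. (Ratio normality guarantees the ratio is eventually defined and tends to $1$, so there is no issue with vanishing denominators.)

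The substantive step is the nowhere density of $G_m$, handled just as the analogous claim for $\DN{Q}$: I would exhibit inside every interval an open subinterval disjoint from $G_m$. Given any interval $I$, choose $N$ large enough that some $Q$-Cantor cylinder $C=C(E_1,\ldots,E_N)$ (the set of $x$ whose first $N$ digits are $E_1,\ldots,E_N$) lies in $I$; this is possible since cylinders have length $1/(q_1\cdots q_N)\to 0$. Now append a long run of $L$ zeros, passing to the subcylinder $C'=C(E_1,\ldots,E_N,0,\ldots,0)$, and take $n=N+L$. For every $x\in C'$ the digits $E_{N+1},\ldots,E_{N+L}$ are all $0$, so $B_1$ occurs at each of the positions $N+1,\ldots,N+L-k+1$, giving $N_n^Q(B_1,x)\ge L-k+1$; meanwhile no occurrence of $B_2$ can start at a position exceeding $N-k+1$, since any such occurrence would require a digit equal to $1$ among the appended zeros, whence $N_n^Q(B_2,x)\le N$. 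Choosing $L>2N+k-1$ then forces $N_n^Q(B_1,x)>2\,N_n^Q(B_2,x)$ at the index $n=N+L\ge m$, so $C'$ is disjoint from $G_m$. The union of the interiors of all such subcylinders is an open dense subset of $G_m^{\mathsf c}$, so $G_m$ is nowhere dense.

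Consequently $G$ is of the first category, and so is its subset $\RNk{Q}{k}$. Finally, if $x\in\Nk{Q}{k}$ then every block count of length $k$ is asymptotic to $Q_n^{(k)}$, so the ratio of any two such counts tends to $1$; hence $\Nk{Q}{k}\subset\RNk{Q}{k}$ and $\Nk{Q}{k}$ is of the first category as well. The only delicate point—and the main obstacle—is the bookkeeping in the nowhere-density step: one must ensure that $B_1$ and $B_2$ are realizable at every index and must correctly discard the block occurrences straddling the index $N$. Both are resolved by using the all-zero and all-one blocks (legitimate because $q_n\ge2$) and by observing that a straddling occurrence of $B_2$ would need a $1$ inside the zero run.
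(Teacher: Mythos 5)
Your proof is correct and follows essentially the same route the paper intends: the paper gives no details beyond ``proven similarly to \reft{DNQfirst},'' and your argument is precisely that category-style proof, with the sets $G_m$ defined by a bounded ratio of the counts of the all-zero and all-one blocks and nowhere density obtained by appending a long run of zeros to an arbitrary cylinder. The points you flag (admissibility of the digits $0$ and $1$ at every index since $q_n\ge 2$, and discarding straddling occurrences of $B_2$) are handled correctly.
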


\section{Proof of \reft{mainthrm}}
Suppose that $Q$ is a basic sequence and $x=E_0.E_1 E_2\cdots$ w.r.t. $Q$.  We let $S(x)$ be the set of all positive integers which occur at least once in the sequence $(E_n)$.  P. Erd\H{o}s and A. R\'enyi \cite{ErdosRenyiConvergent} proved the following theorem.

\begin{thrm}
If $Q$ is infinite in limit and $1$-convergent, then the density of $S(x)$ is with probability $1$ equal to $0$.
\end{thrm}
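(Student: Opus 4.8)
The plan is to compute with the standard probabilistic model for the $Q$-Cantor series expansion: under Lebesgue measure on $[0,1)$ (which is the measure behind the phrase ``with probability $1$''), the digits $E_1,E_2,\ldots$ are independent, with $E_n$ uniformly distributed on $\{0,1,\ldots,q_n-1\}$. Writing $A_N=|S(x)\cap\{1,\ldots,N\}|=\sum_{j=1}^N Y_j$ with $Y_j=\mathbf{1}[j\in S(x)]$, the goal is exactly $A_N/N\to 0$ a.s. First I would handle the first moment. A value $j$ fails to occur iff $E_n\neq j$ for every $n$ with $q_n>j$, so by independence
$$
\mathbb{P}(j\notin S(x))=\prod_{n:\,q_n>j}\left(1-\tfrac{1}{q_n}\right),\qquad \mathbb{P}(j\in S(x))\le \sum_{n:\,q_n>j}\tfrac{1}{q_n}=:T_j.
$$
The hypothesis that $Q$ is $1$-convergent means precisely $\sum_n 1/q_n<\infty$, and a short tail estimate (choose $M$ with $\sum_{n>M}1/q_n<\varepsilon/2$; the finitely many terms $n\le M$ drop out once $j\ge\max_{n\le M}q_n$) gives $T_j\to 0$. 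Hence $\mathbb{P}(j\in S(x))\to 0$, and by Ces\`aro averaging $\mathbb{E}[A_N]=\sum_{j=1}^N\mathbb{P}(j\in S(x))=o(N)$. This already yields $L^1$-convergence of $A_N/N$ to $0$.

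The real work is upgrading this to an almost-sure statement. The key structural fact I would establish is negative correlation of the events $\{j\in S(x)\}$: for $i\ne j$ and any $n$ with $i,j<q_n$ one has $\mathbb{P}(E_n\ne i,\ E_n\ne j)=1-\tfrac{2}{q_n}\le\left(1-\tfrac{1}{q_n}\right)^2$, so multiplying over $n$ gives $\mathbb{P}(i\notin S,\ j\notin S)\le \mathbb{P}(i\notin S)\,\mathbb{P}(j\notin S)$ and therefore $\operatorname{Cov}(Y_i,Y_j)\le 0$. Consequently $\operatorname{Var}(A_N)\le\sum_{j=1}^N\operatorname{Var}(Y_j)\le\sum_{j=1}^N\mathbb{E}[Y_j]=\mathbb{E}[A_N]$. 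I would then run Chebyshev along the geometric subsequence $N_k=2^k$: for fixed $\varepsilon>0$ and $k$ large enough that $\mathbb{E}[A_{N_k}]<\varepsilon N_k/2$,
$$
\mathbb{P}(A_{N_k}>\varepsilon N_k)\le \mathbb{P}\!\left(|A_{N_k}-\mathbb{E}A_{N_k}|>\tfrac{\varepsilon N_k}{2}\right)\le \frac{4\operatorname{Var}(A_{N_k})}{\varepsilon^2 N_k^2}\le \frac{4\,\mathbb{E}[A_{N_k}]}{\varepsilon^2 N_k^2}\le \frac{4}{\varepsilon^2 N_k},
$$
which is summable in $k$ since $N_k=2^k$. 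By Borel--Cantelli, $A_{N_k}/N_k\le\varepsilon$ eventually a.s.; intersecting over $\varepsilon=1/m$ gives $A_{N_k}/N_k\to 0$ a.s. Finally, since $A_N$ is nondecreasing in $N$ and $N_{k+1}/N_k=2$, for $N_k\le N\le N_{k+1}$ we get $A_N/N\le 2\,(A_{N_{k+1}}/N_{k+1})\to 0$, so $A_N/N\to 0$ a.s.\ for the full sequence, i.e.\ $S(x)$ has density $0$ almost surely.

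The main obstacle is exactly the passage from convergence in expectation to almost-sure convergence: the first-moment computation is soft, and on its own Markov's inequality does not give a summable bound for a Borel--Cantelli argument. The decisive ingredient is the variance bound $\operatorname{Var}(A_N)=O(\mathbb{E}[A_N])$, which rests on the negative correlation above; once that is in hand, the monotonicity of the numerator lets one work along a sparse subsequence and interpolate cheaply. I would also emphasize in the writeup that $1$-convergence enters only through $T_j\to 0$ (the finiteness of $\sum 1/q_n$); in the $1$-divergent regime each digit occurs infinitely often almost surely, so $S(x)$ has density $1$, confirming that this hypothesis is the crux of the statement.
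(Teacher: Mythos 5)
Your proof is correct, but there is nothing in the paper to compare it against: this statement is quoted as a theorem of Erd\H{o}s and R\'enyi, and the paper supplies only the citation to \cite{ErdosRenyiConvergent}, not a proof. On its own terms your argument is sound and complete. The model (independent digits, $E_n$ uniform on $\{0,\dots,q_n-1\}$ under Lebesgue measure) is the standard one for Cantor series; the first-moment bound $\mathbb{P}(j\in S(x))\le\sum_{n:\,q_n>j}1/q_n\to 0$ uses $1$-convergence exactly where it should (and, as you note, the ``infinite in limit'' hypothesis is then automatic, since $\sum 1/q_n<\infty$ forces $q_n\to\infty$); and the upgrade from $\mathbb{E}[A_N]=o(N)$ to an almost-sure statement via negative correlation, $\operatorname{Var}(A_N)\le\mathbb{E}[A_N]$, Chebyshev along $N_k=2^k$, Borel--Cantelli, and monotone interpolation is a standard and correct second-moment scheme. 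One step you should spell out when writing this up: the coordinate-wise inequality $\mathbb{P}(E_n\ne i,\ E_n\ne j)\le\mathbb{P}(E_n\ne i)\,\mathbb{P}(E_n\ne j)$ is stated only for $n$ with $i,j<q_n$, but the product over \emph{all} $n$ also involves indices where only one (or neither) of $i,j$ is an admissible digit; in those cases the two sides are equal ($1-1/q_n$ versus $1\cdot(1-1/q_n)$, or $1$ versus $1$), so the conclusion stands, but the case split belongs in the proof. For context, the original Erd\H{o}s--R\'enyi paper proves considerably sharper asymptotics for the number of distinct digit values; your argument is a more elementary route that delivers exactly the density-zero statement the present paper needs.
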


\begin{cor}\labc{measnon}
If $Q$ is infinite in limit and $1$-convergent, then $\la(\RNk{Q}{1})=0$.
\end{cor}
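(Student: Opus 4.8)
The plan is to deduce the corollary directly from the Erd\"os--R\'enyi theorem stated immediately above, by showing that membership in $\RNk{Q}{1}$ forces the digit set $S(x)$ to be all of $\mathbb{N}$, and hence to have density $1$ --- an event that the Erd\"os--R\'enyi result says happens only on a Lebesgue null set.

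First I would establish the containment
$$
\RNk{Q}{1} \subseteq \{x : S(x) = \mathbb{N}\}.
$$
Let $x=E_0.E_1E_2\cdots$ w.r.t. $Q$ with $x \in \RNk{Q}{1}$, and suppose toward a contradiction that some positive integer $j$ never occurs among the digits $(E_n)$, so that $N_n^Q((j),x)=0$ for every $n$. Since the expansion $(E_n)$ is infinite, some digit value $i$ does occur, and necessarily $i\neq j$; thus $N_n^Q((i),x)\geq 1$ for all sufficiently large $n$. Then the ratio $N_n^Q((i),x)/N_n^Q((j),x)$ has vanishing denominator and in particular cannot converge to $1$, contradicting $x \in \RNk{Q}{1}$. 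Hence every positive integer appears in the $Q$-Cantor series expansion of $x$, that is $S(x)=\mathbb{N}$, and so the density of $S(x)$ equals $1$. This is the order-$1$ analogue of \refl{Qinfblocks}, and it is the only place where the hypothesis that $Q$ is infinite in limit enters.

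With the containment in hand the conclusion is immediate. By the Erd\"os--R\'enyi theorem, for $Q$ infinite in limit and $1$-convergent the density of $S(x)$ equals $0$ for Lebesgue-almost every $x$; consequently the set $\{x : \text{the density of } S(x) = 1\}$ is a Lebesgue null set. Since
$$
\RNk{Q}{1} \subseteq \{x : S(x) = \mathbb{N}\} \subseteq \{x : \text{the density of } S(x) = 1\},
$$
I conclude that $\la\!\left(\RNk{Q}{1}\right)=0$.

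The only delicate point is the containment step: one must argue that a single missing digit value already destroys order-$1$ ratio normality, which relies on reading the defining condition of $\RNk{Q}{1}$ as requiring each ratio $N_n^Q((i),x)/N_n^Q((j),x)$ to be eventually well-defined and to tend to $1$. I expect this bookkeeping about the ratio (and the use of $Q$ infinite in limit to guarantee that every positive integer is an admissible digit from some position on) to be the main, though still routine, obstacle; everything after it is a direct appeal to the Erd\"os--R\'enyi density result.
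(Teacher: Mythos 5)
Your proof is correct and follows exactly the route the paper intends: the paper states \refc{measnon} without proof as an immediate consequence of the Erd\H{o}s--R\'enyi theorem, and your argument that a digit missing from the expansion forces a ratio $N_n^Q((i),x)/N_n^Q((j),x)$ with identically zero denominator (the order-$1$ analogue of \refl{Qinfblocks}) is precisely the missing containment $\RNk{Q}{1} \subseteq \{x : S(x)=\mathbb{N}\}$. No issues.
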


The first part of \reft{mainthrm} follows immediately from \refc{measnon} as the sets $\DNQ \backslash \RNk{Q}{1}$ have full measure when $Q$ is infinite in limit and $1$-convergent.  The remainder of this paper will be devoted to proving the second part of \reft{mainthrm}.  
\subsection{Construction of $\tq$}
For the rest of this section, we fix basic sequences $Q=(q_n)$ and $S=(s_n)$.  We let $Q_1=Q$ and define basic sequences $Q_j=(q_{j,n})$ by
$$
Q_1 \LS{1} Q_2 \LS{2} Q_3 \LS{3} Q_4 \cdots.
$$
We will define the following notation.  Let $S_j=\prod_{k=1}^{j-1} s_j$ and set
$
\nu_j=\min \left\{t \in \mathbb{Z} : q_m \geq S_j^{2j} \hbox{ for }m \geq t\right\}.
$
Put $l_1=s_1 \nu_2$ and
$$
l_j=\frac {\left(\sum_{k=1}^{j-1} S_k l_k \right)\cdot (2js_j\nu_{j+1}-1)} {S_j}.
$$
Given $l_1, l_2, \cdots, l_j$, define
$
L_j=\sum_{k=1}^j S_k l_k
$.
Thus, we may write
$$
l_j=\frac {L_{j-1}\cdot (2js_j\nu_{j+1}-1)} {S_j}.
$$
Let $\mathscr{U}=\{(j,b,c) \in \mathbb{N}^3 : b \leq l_j, c \leq S_j\}$.  Put
$$
\phi(j,b,c)=L_{j-1}+(b-1)S_j+c.
$$
Note that $\phi:\mathbb{N}^3 \to \mathbb{N}$ is a bijection.    Define
$$
(i(n),b(n),c(n))=\phi^{-1}(n)
$$
and put $a(n)=S_{i(n)}$.  Let 
$$
\mathscr{F}=\left\{\left(F_{(j,b,c)}\right)_{(j,b,c) \in \mathscr{U}} \subseteq \mathbb{N}^3 \Bigg| \frac {F_{(j,b,c)}} {q_{\phi(j,b,c)}} \in  V_{j,b,c}\right\},
$$
where 
\begin{displaymath}
V_{j,b,c}=\left\{ \begin{array}{ll}
\left[\frac{1} {q_{\phi(j,b,c)}},\frac {2} {q_{\phi(j,b,c)}} \right) & \textrm{if $j=1$}\\
 & \\
\left[\frac{c}{a(\phi(j,b,c))}+\frac {1} {a(\phi(j,b,c))^2}, \frac{c}{a(\phi(j,b,c))}+\frac {2} {a(\phi(j,b,c))^2}\right] & \textrm{if $j>1$}
\end{array} \right. .
\end{displaymath}
Given $F \in \mathscr{F}$, we set $E_{F,n}=F_{\phi^{-1}(n)}$, $E_F=(E_{F,n})_{n=1}^\infty$, and put
$$
x_F=\Fformalsum.
$$
We set
$\Theta_{Q,S}= \{ x_F : F \in \mathscr{F} \}$.  It will be proven that $\Theta_{Q,S}$ is non-empty, has full Hausdorff dimension, and
$$
\Theta_{Q,S} \subsetneq \RNisect.
$$

\subsection{Distribution normality of members of $\tq$}

%y_{F,j,j+3,2}\ldots y_{F,j,j+3,l_3}y_{F,j,j+4,1}\ldots.
Let $\omega(n)=\#\{E_{F,n} : F \in \mathscr{F} \}$.  
\begin{lem}\labl{omegasize}
$\omega(n)=1$ if $i(n)=1$ and
$$
\omega(n) \geq q_n^{1-1/i(n)}>2
$$
if $i(n) \geq 2$.
\end{lem}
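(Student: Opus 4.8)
The plan is to read off $\omega(n)$ directly from the product structure of $\mathscr{F}$. Since each coordinate $F_{(j,b,c)}$ of $F$ may be chosen independently subject only to $F_{(j,b,c)}/q_{\phi(j,b,c)} \in V_{j,b,c}$, the set of values taken by the $n$-th digit $E_{F,n}=F_{\phi^{-1}(n)}$ as $F$ ranges over $\mathscr{F}$ is exactly the set of integers $F$ with $F/q_n \in V_{i(n),b(n),c(n)}$. Thus the whole statement reduces to a lattice-point count in the interval $q_n V_{i(n),b(n),c(n)}$. When $i(n)=1$ we have $V_{1,b,c}=[1/q_n,2/q_n)$, so $F/q_n\in[1/q_n,2/q_n)$ forces $F=1$; every $F\in\mathscr{F}$ then gives $E_{F,n}=1$ and $\omega(n)=1$, as claimed.

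Now suppose $i(n)=j\geq 2$, so $a(n)=S_j$ and $V_{j,b,c}$ is a closed interval of length $1/S_j^2$. Then $q_n V_{j,b,c}$ is a closed interval of length $q_n/S_j^2$, and a closed interval of length $L\geq 1$ contains at least $\floor{L}$ integers, whence $\omega(n)\geq \floor{q_n/S_j^2}$. Everything therefore comes down to the lower bound $q_n \geq S_j^{2j}$: granting it, we get $S_j^2\leq q_n^{1/j}$, hence $q_n/S_j^2\geq q_n^{1-1/j}$, which delivers $\omega(n)\geq q_n^{1-1/j}$ (the integer rounding is harmless, since the same inequality forces $q_n^{1-1/j}\geq S_j^{2(j-1)}\geq 2^{2(j-1)}\geq 4$, so in particular $\omega(n)>2$).

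The crux is therefore to prove $q_n\geq S_j^{2j}$, and I would obtain it by showing $n=\phi(j,b,c)\geq \nu_j$ and invoking the defining property of $\nu_j$. Because $b,c\geq 1$, we have $n=L_{j-1}+(b-1)S_j+c\geq L_{j-1}+1$, so it suffices to show $L_{j-1}\geq \nu_j$. This is exactly what the recursive definition of $l_j$ is engineered to give: from $S_j l_j=L_{j-1}(2js_j\nu_{j+1}-1)$ one gets the telescoping identity $L_j=L_{j-1}+S_j l_j=2js_j\nu_{j+1}L_{j-1}$ for $j\geq 2$, with base value $L_1=s_1\nu_2$. Reindexing, $L_{j-1}=2(j-1)s_{j-1}\nu_j L_{j-2}$ for $j\geq 3$, and since each factor is at least one (indeed $s_{j-1}\geq 2$, $\nu_j\geq 1$, and $L_{j-2}\geq L_1=s_1\nu_2\geq 1$ because the $L_k$ are increasing), we conclude $L_{j-1}\geq \nu_j$; for $j=2$ the bound $L_1=s_1\nu_2\geq \nu_2$ is immediate. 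Hence $n\geq L_{j-1}+1\geq \nu_j$ and $q_n\geq S_j^{2j}$.

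The main obstacle is precisely this last step, namely unwinding the nested definitions of $l_j$ and $L_j$ to establish $L_{j-1}\geq \nu_j$; once the telescoping identity $L_j=2js_j\nu_{j+1}L_{j-1}$ is in hand the rest is bookkeeping. The only other point requiring (minor) care is the lattice-point count, where one should note that the closed interval $q_n V_{j,b,c}$ of length $q_n/S_j^2\geq q_n^{1-1/j}>2$ contains at least $\floor{q_n/S_j^2}$ integers, so that the floor does not disturb either the bound $\omega(n)\geq q_n^{1-1/j}$ or the strict inequality $\omega(n)>2$.
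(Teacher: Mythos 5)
Your proof is correct and follows essentially the same route as the paper: count the integers in the scaled interval $q_n V_{i(n),b(n),c(n)}$, which has length $q_n/a(n)^2$, and combine this with the bound $q_n\geq a(n)^{2i(n)}$ --- a fact the paper dispatches with ``by construction'' and which you verify explicitly by unwinding the recursion $L_j=2js_j\nu_{j+1}L_{j-1}$ to get $n\geq \nu_{i(n)}$. The only rough edge, shared with the paper, is the lattice-point count itself: your $\lfloor q_n/S_j^2\rfloor$ and the paper's $1+\lfloor q_n/a(n)^2\rfloor$ both carry a harmless off-by-one ambiguity (a closed interval of length $L$ need only contain $\lfloor L\rfloor$ integers), which affects nothing downstream.
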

\begin{proof}
By construction, $E_{F,n}=1$ when $i(n)=1$.  If $i(n) \geq 2$, then
$$
\frac {E_{F,n}} {q_n} \in \left[\frac{c(n)}{a(n)}+\frac {1} {a(n)^2}, \frac{c(n)}{a(n)}+\frac {2} {a(n)^2}\right],
$$
which has length $1/a(n)^2$.  Thus,
%$$
%\omega_n \geq q_n \cdot \frac {1} {a(n)^2} \geq q_n^{1-1/i(n)}
%$$
$$
\omega(n)=1+\floor{q_n \cdot (1/a(n)^2)} \geq \frac {q_n} {a(n)^2}.
$$
By construction, $q_n \geq a(n)^{2i(n)}$, thus $q_n^{1/i(n)} \geq a(n)^2$.  So $\frac {q_n^{1/i(n)}} {a(n)^2} \geq 1$ and
$
\frac {q_n} {a(n)^2} \geq q_n^{1-1/i(n)}.
$
Additionally, $n \geq \nu_2$, so $q_n \geq s_1^{2 \cdot 2} \geq 16$.  Thus, $q_n^{1-1/i(n)} \geq 4>2$.
\end{proof}

\refl{omegasize} guarantees that $\Theta_{Q,S}$ is non-empty, but will also be critical in determining $\dimh(\Theta_{Q,S})$.  If $x_F=\sum_{n=1}^\infty \frac {E_{F,n}} {q_{1}q_{2} \cdots q_{n}}$, then the $Q_j$-Cantor series expansion of $x_F$ is
$$
x_F=\sum_{n=1}^\infty \frac {E_{F,j,n}} {q_{j,1}q_{j,2} \cdots q_{j,n}},
$$
where
\begin{align*}
E_{F,j,n}=\sum_{v=1}^{S_j}\left( E_{F,S_j \cdot (n-1)+v} \cdot \prod_{w=v+1}^{S_j} q_{S_j \cdot (n-1)+w}\right) \hbox{ and }q_{j,n}=\prod_{w=1}^{S_j} q_{S_j \cdot (n-1)+w}.
\end{align*}
\begin{lem}\labl{modS}
For all $j,n \geq 1$
\begin{align*}
&0 \leq \frac {E_{F,j,n}} {q_{j,n}}-\frac {E_{F,S_j \cdot (n-1)+1}}{q_{S_j \cdot (n-1)+1}} <\frac {S_j} {q_{S_j \cdot (n-1)+1}};\\
&\lim_{n \to \infty} \frac {E_{F,j,n}} {q_{j,n}}-\frac {E_{F,S_j \cdot (n-1)+1}}{q_{S_j \cdot (n-1)+1}}=0.
\end{align*}
\end{lem}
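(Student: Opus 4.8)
The plan is to observe that once $E_{F,j,n}$ is divided by $q_{j,n}$, the quotient collapses into a finite Cantor-type partial sum whose leading term is exactly $E_{F,S_j\cdot(n-1)+1}/q_{S_j\cdot(n-1)+1}$; the two claimed estimates then become the standard nonnegativity and tail bounds for such sums, so the lemma is essentially bookkeeping once this simplification is made.

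First I would substitute the definitions of $E_{F,j,n}$ and $q_{j,n}$ given just above the statement. Writing $r_w=q_{S_j\cdot(n-1)+w}$ and $e_v=E_{F,S_j\cdot(n-1)+v}$ for $1\le v,w\le S_j$, the definitions give
$$
\frac{E_{F,j,n}}{q_{j,n}}=\frac{\sum_{v=1}^{S_j} e_v \prod_{w=v+1}^{S_j} r_w}{\prod_{w=1}^{S_j} r_w}=\sum_{v=1}^{S_j}\frac{e_v}{r_1 r_2\cdots r_v},
$$
since in the $v$-th summand the numerator and the full denominator share precisely the factors $r_{v+1},\ldots,r_{S_j}$. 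The $v=1$ term is exactly $e_1/r_1=E_{F,S_j\cdot(n-1)+1}/q_{S_j\cdot(n-1)+1}$, so the difference under consideration equals the tail $\sum_{v=2}^{S_j} e_v/(r_1\cdots r_v)$.

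Nonnegativity is then immediate, as every $e_v\ge 0$ and every $r_w\ge 2$. For the upper bound I would use $e_v\le r_v-1$ (which holds because $E_m\in\{0,\ldots,q_m-1\}$) together with the telescoping identity $\frac{r_v-1}{r_1\cdots r_v}=\frac{1}{r_1\cdots r_{v-1}}-\frac{1}{r_1\cdots r_v}$, so that the tail collapses to $\frac{1}{r_1}-\frac{1}{r_1\cdots r_{S_j}}<\frac{1}{r_1}\le\frac{S_j}{q_{S_j\cdot(n-1)+1}}$; a cruder bound of $S_j-1$ terms each below $1/r_1$ would do equally well. This establishes both displayed inequalities of the first line.

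Finally, for the limit I would note that $j$, and hence $S_j$, is held fixed while $S_j\cdot(n-1)+1\to\infty$ as $n\to\infty$. Since $Q$ is infinite in limit, $r_1=q_{S_j\cdot(n-1)+1}\to\infty$, so $S_j/q_{S_j\cdot(n-1)+1}\to 0$; squeezing the difference between $0$ and this quantity via the first line yields the second assertion. I do not anticipate a genuine obstacle here — the only step demanding any care is the algebraic simplification of the quotient into the finite partial sum, after which the result reduces to the elementary estimate for tails of Cantor series.
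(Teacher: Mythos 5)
Your proposal is correct and follows essentially the same route as the paper: both isolate the $v=1$ term of the expansion of $E_{F,j,n}/q_{j,n}$, bound the remaining tail termwise using $E_m\le q_m-1$ to obtain the estimate $S_j/q_{S_j\cdot(n-1)+1}$ (the paper uses the cruder ``$S_j-1$ terms each at most $1/r_1$'' bound you mention as an alternative to your telescoping), and then let $q_{S_j\cdot(n-1)+1}\to\infty$ since $Q$ is infinite in limit. No gaps.
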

\begin{proof}
\begin{align*}
0 \leq& \frac {E_{F,j,n}} {q_{j,n}}-\frac {E_{F,S_j \cdot (n-1)+1}}{q_{S_j \cdot (n-1)+1}} = 
\frac {\sum_{v=2}^{S_j}\left( E_{F,S_j \cdot (n-1)+v} \cdot \prod_{w=v+1}^{S_j} q_{S_j \cdot (n-1)+w}\right)}  {\prod_{w=1}^{S_j} q_{S_j \cdot (n-1)+w}} \\
\leq& \sum_{v=2}^{S_j} \frac {(q_{S_j \cdot (n-1)+v}-1)\prod_{w=v+1}^{S_j} q_{S_j \cdot (n-1)+w}}{\prod_{w=1}^{S_j} q_{S_j \cdot (n-1)+w}}
<\sum_{v=2}^{S_j} \frac {\prod_{w=v}^{S_j} q_{S_j \cdot (n-1)+w}}{\prod_{w=1}^{S_j} q_{S_j \cdot (n-1)+w}} \\
=&\sum_{v=2}^{S_j} \frac {1} {\prod_{w=1}^{v-1} q_{S_j \cdot (n-1)+w}}
=\frac {1} {q_{S_j \cdot (n-1)+1}} \cdot \sum_{v=2}^{S_j} \frac {1} {\prod_{w=2}^{v-1} q_{S_j \cdot (n-1)+w}}
\leq\frac {S_j} {q_{S_j \cdot (n-1)+1}} \to 0,
\end{align*}
as $q_{S_j \cdot (n-1)+1} \to \infty$.
\end{proof}

\refl{modS} suggests the key observation that the $Q_j$-distribution normality of a member of $\tq$ is determined entirely by its digits $(E_n)$ in base $Q$, where $n \equiv 1 \pmod{S_j}$.  Thus, we prove the following.

\begin{lem}\labl{SdivL}
For all $j \geq 1$, $S_{j+1}$ divides $L_j$.
\end{lem}
\begin{proof}
We prove this by induction.  The base case holds as $L_1=l_1=s_1\nu_2$.  Assume that $S_{j} | L_{j-1}$. Then % and $l_1,l_2,\cdots,l_{j-1}$ are all integers.
\begin{align}\labeq{SdivLeq}
L_j=&L_{j-1}+S_j \cdot \frac {L_{j-1}\cdot (2js_j\nu_{j+1}-1)} {S_j}=2jL_{j-1}\nu_{j+1} s_j\\
=&\left(2j\nu_{j+1} \cdot \frac {L_{j-1}} {S_j}\right) S_{j}s_j=\left(2j\nu_{j+1} \cdot \frac {L_{j-1}} {S_j}\right) S_{j+1}.
\end{align}
\end{proof}

\begin{lem}\labl{threeprops}
For all $j \geq 1$, $l_j$ is an integer, $l_j \geq js_j$, and $L_j \geq \nu_{j+1}-1$.
\end{lem}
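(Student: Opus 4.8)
The three assertions follow quickly from \refl{SdivL}, and in fact from the identity established inside its proof; throughout I use only that $s_n \ge 2$ (as $S$ is a basic sequence) and that $\nu_j \ge 1$ for every $j$. I would first dispose of integrality. For $j=1$, $l_1 = s_1\nu_2$ is a product of integers. For $j \ge 2$ write $l_j = L_{j-1}(2js_j\nu_{j+1}-1)/S_j$; by \refl{SdivL} applied with index $j-1$ we have $S_j \mid L_{j-1}$, so $S_j$ divides the numerator and $l_j \in \mathbb{Z}$. There is no circularity in this order of argument, because \refeq{SdivLeq} is a purely algebraic identity, valid regardless of whether $l_j$ is an integer.

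The key simplification is that \refeq{SdivLeq} collapses the defining recursion into a multiplicative one: for $j \ge 2$, $L_j = L_{j-1} + S_j l_j = 2js_j\nu_{j+1}L_{j-1}$. Starting from $L_1 = s_1\nu_2 \ge 2$, a one-line induction then gives $L_j \ge 1$ for all $j$, each factor $2js_j\nu_{j+1}$ being a positive integer. The bound $L_j \ge \nu_{j+1}-1$ now drops out: for $j \ge 2$, $L_j = 2js_j\nu_{j+1}L_{j-1} \ge \nu_{j+1} > \nu_{j+1}-1$ since $2js_jL_{j-1} \ge 1$, while for $j=1$, $L_1 = s_1\nu_2 \ge \nu_2 > \nu_2 - 1$.

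It remains to prove $l_j \ge js_j$, and here the single genuinely new ingredient is the auxiliary inequality $L_m \ge S_{m+1}$ for all $m \ge 1$ (equivalently $L_{j-1} \ge S_j$ for $j \ge 2$). I would prove it by induction using the collapsed recursion: the base case is $L_1 = s_1\nu_2 \ge s_1 = S_2$, and the step is $L_m = 2ms_m\nu_{m+1}L_{m-1} \ge s_m L_{m-1} \ge s_m S_m = S_{m+1}$. Granting $L_{j-1} \ge S_j$, for $j \ge 2$ we get $l_j = (L_{j-1}/S_j)(2js_j\nu_{j+1}-1) \ge 2js_j - 1 \ge js_j$, and for $j=1$ simply $l_1 = s_1\nu_2 \ge s_1$. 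The whole lemma is light on content: once \refeq{SdivLeq} is read as $L_j = 2js_j\nu_{j+1}L_{j-1}$, each claim is an immediate induction, and I expect the only (mild) obstacle to be isolating the estimate $L_{j-1} \ge S_j$, which is the one assertion not handed to us directly by the collapsed recursion.
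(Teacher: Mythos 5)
Your proof is correct and follows essentially the same route as the paper: integrality via \refl{SdivL}, the bound $l_j \geq js_j$ from \refeq{ljint} together with $2js_j\nu_{j+1}-1 \geq js_j$, and the last assertion from the collapsed recursion $L_j = 2js_j\nu_{j+1}L_{j-1}$ of \refeq{SdivLeq}. The one point where you go beyond the paper is in explicitly isolating and proving the auxiliary inequality $L_{j-1} \geq S_j$, which the paper uses silently when passing from \refeq{ljint} to $l_j \geq js_j$; this is a worthwhile addition rather than a deviation.
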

\begin{proof}
$l_1=s_1\nu_2$ is an integer.  To show that $l_j$ is an integer for $j \geq 2$, we write
\begin{equation}\labeq{ljint}
l_j=\frac {L_{j-1}}{S_j} \cdot (2js_j \nu_{j+1}-1),
\end{equation}
which is an integer by \refl{SdivL}.
Since $\nu_{j+1}\geq 1$, $2\nu_{j+1}js_j-1 \geq js_j$.  Thus, by \refeq{ljint}, $l_j \geq js_j$.  The last assertion follows directly from \refeq{SdivLeq}.
\end{proof}

\begin{definition}\labd{stardiscrepancy}
For a finite sequence $z=(z_1,\ldots,z_n)$, we define the
{\it star discrepancy $D_n^*=D_n^*(z_1,\ldots,z_n)$} as
$$
\sup_{0<\gamma\le 1}\left|{A([0,\gamma),z)\over n}-\gamma\right|.
$$
Given an infinite sequence $w=(w_1,w_2,\ldots)$, we define
$
D_n^*(w)=D_n^*(w_1,w_2,\ldots,w_n).
$
For convenience, set $D^*(z_1,\ldots,z_n)=D_n^*(z_1,\ldots,z_n)$.
\end{definition}
%The star discrepancy will be useful to us due to the following theorem:
\begin{thrm}
The sequence $w=(w_1,w_2,\ldots)$ is uniformly distributed mod $1$ if and only if $\lim_{n \to \infty} D_n^*(w)=0$.
\end{thrm}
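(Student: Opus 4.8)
The plan is to prove the two implications separately, working throughout with fractional parts so that I may assume each $w_i \in [0,1)$ and understand $A([0,\gamma),w)$ as the count among the first $n$ terms $w_1,\ldots,w_n$. The easy direction is the converse: suppose $\lim_{n\to\infty} D_n^*(w)=0$, and fix an arbitrary subinterval $[a,b) \subseteq [0,1)$. I would write the count over $[a,b)$ as a difference of counts over initial segments,
$$
\frac{A([a,b),w)}{n} = \frac{A([0,b),w)}{n} - \frac{A([0,a),w)}{n}.
$$
By the definition of the star discrepancy, each term on the right differs from $b$ (respectively $a$) by at most $D_n^*(w)$, so the left-hand side differs from $b-a$ by at most $2D_n^*(w) \to 0$. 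As $[a,b)$ was arbitrary, $w$ is uniformly distributed mod $1$.

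The reverse direction is the main obstacle. From uniform distribution I obtain $A([0,\gamma),w)/n \to \gamma$ for each individual $\gamma$, but the star discrepancy is a \emph{supremum} over the uncountably many $\gamma \in (0,1]$, and pointwise convergence does not automatically deliver uniform convergence of this supremum. The device that resolves this is the monotonicity of the map $\gamma \mapsto A([0,\gamma),w)$, which lets me reduce the supremum to a check at finitely many grid points. Given $\epsilon>0$, I would fix an integer $m$ with $1/m<\epsilon$ and set $\gamma_k = k/m$ for $0 \le k \le m$. Applying uniform distribution to each of the finitely many initial intervals $[0,\gamma_k)$ yields an $N$ such that
$$
\left| \frac{A([0,\gamma_k),w)}{n} - \gamma_k \right| < \epsilon \qquad \text{for all } n \ge N \text{ and all } k.
$$

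Finally I would treat an arbitrary $\gamma \in (0,1]$ by choosing $k$ with $\gamma_k \le \gamma < \gamma_{k+1}$ and sandwiching: monotonicity gives $A([0,\gamma_k),w) \le A([0,\gamma),w) \le A([0,\gamma_{k+1}),w)$, so for $n \ge N$ the lower estimate yields $A([0,\gamma),w)/n - \gamma \ge (\gamma_k - \epsilon) - \gamma_{k+1} = -1/m - \epsilon$ and the upper estimate yields $A([0,\gamma),w)/n - \gamma \le (\gamma_{k+1} + \epsilon) - \gamma_k = 1/m + \epsilon$. Hence $\left| A([0,\gamma),w)/n - \gamma \right| < \epsilon + 1/m < 2\epsilon$ uniformly in $\gamma$, so taking the supremum gives $D_n^*(w) \le 2\epsilon$ for all $n \ge N$. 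Since $\epsilon$ is arbitrary, $D_n^*(w) \to 0$. I expect the only delicate point to be the bookkeeping at the grid endpoints and keeping the directions of the inequalities straight in the sandwich, but nothing beyond routine care is required.
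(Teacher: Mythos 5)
Your proof is correct. The paper states this equivalence without proof, treating it as a classical fact from \cite{KuN}, and your argument (the difference of initial-segment counts for the easy direction, and the finite grid $\gamma_k=k/m$ with the monotonicity sandwich to upgrade pointwise to uniform convergence for the hard direction) is precisely the standard proof of that fact; the only bookkeeping you wave at, the endpoint $\gamma=1$ where no $k$ satisfies $\gamma_k\le\gamma<\gamma_{k+1}$, is harmless since $A([0,1),w)=n$ exactly once one passes to fractional parts.
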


We will make use of the following definition from \cite{KuN}:

\begin{definition}\labd{almostarithmetic}
For $0 \leq \delta < 1$ and $\e>0$, a finite sequence $x_1<x_2<\cdots<x_N$ in $[0,1)$ is called an almost-arithmetic progression-$(\delta,\e)$ if there exists an $\eta$, $0<\eta \leq \e$, such that the following conditions are satisfied:
\begin{equation}\labeq{61}
0 \leq x_1 \leq \eta+\delta\eta;
\end{equation}
\begin{equation}\labeq{62}
\eta-\delta\eta \leq x_{n+1}-x_n \leq \eta+\delta\eta \hbox{ for } 1 \leq n \leq N-1;
\end{equation}
\begin{equation}\labeq{63}
1-\eta-\delta\eta \leq x_N <1.
\end{equation}
\end{definition}

Almost arithmetic progressions were introduced by P. O'Neil in \cite{ONeil}.  He proved that a sequence $(x_n)$ of real numbers in $[0,1)$ is uniformly distributed mod $1$ if and only if the following holds: for any three positive real numbers $\delta$, $\e$, and $\e'$, there exists a positive integer $N$ such that for all $n > N$, the initial segment $x_1,x_2,\ldots,x_n$ can be decomposed into an almost-arithmetic progression-$(\delta,\e)$ with at most $N_0$ elements left over, where $N_0 < \e' N$.  We will use the following theorem from \cite{Niederreiter}:

\begin{thrm}\labt{AAdisc}
Let $x_1<x_2<\cdots<x_N$ be an almost arithmetic progression-$(\delta,\e)$ and let $\eta$ be the positive real number corresponding to the sequence according to \refd{almostarithmetic}.  Then
$$
D_N^* \leq \frac {1} {N}+\frac {\delta} {1+\sqrt{1-\delta^2}} \hbox{ for } \delta>0 \hbox{ and } D_N^* \leq \min \left( \eta, \frac {1} {N} \right) \hbox{ for } \delta=0.
$$
\end{thrm}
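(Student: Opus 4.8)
The plan is to reduce the star discrepancy of \refd{stardiscrepancy} to a finite maximum over the $N$ points, and then to pin down the location of each point using the defining inequalities \refeq{61}--\refeq{63}. First I would record the standard closed form for the star discrepancy of an increasing finite sequence: for $x_1<x_2<\cdots<x_N$ in $[0,1)$ one has
$$
D_N^*=\max_{1\le k\le N}\max\left(\frac{k}{N}-x_k,\ x_k-\frac{k-1}{N}\right).
$$
This comes from examining $f(\gamma)=A([0,\gamma),z)/N-\gamma$: on each interval between consecutive points $f$ decreases with slope $-1$, and at each $x_k$ it jumps upward by $1/N$, so its supremum over $(0,1]$ is approached from the right of some $x_k$ (contributing $k/N-x_k$) and its infimum from the left (contributing $(k-1)/N-x_k$). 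Hence it suffices to bound the two deviations $k/N-x_k$ and $x_k-(k-1)/N$ uniformly in $k$.

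Next I would obtain two-sided estimates for each $x_k$ by telescoping the gap inequality \refeq{62} from both ends. Abbreviating $\eta_+=\eta(1+\delta)$ and $\eta_-=\eta(1-\delta)$, summing forward from $x_1$ and using \refeq{61} gives $(k-1)\eta_-\le x_k\le k\eta_+$, while summing backward from $x_N$ and using \refeq{63} gives $1-(N-k+1)\eta_+\le x_k\le 1-(N-k)\eta_-$. Thus $x_k$ is trapped between $\max\big((k-1)\eta_-,\,1-(N-k+1)\eta_+\big)$ and $\min\big(k\eta_+,\,1-(N-k)\eta_-\big)$, and these are exactly the bounds needed to control the two quantities from the first step.

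The crux --- and the step I expect to be the main obstacle --- is extracting the exact constant $\delta/(1+\sqrt{1-\delta^2})$ from these linear-in-$k$ estimates. To bound $k/N-x_k$, for instance, I would feed in the two lower bounds on $x_k$; each resulting bound is linear in $k$, one with slope below $1/N$ and one above (the progression forces $\eta$ to be comparable to $1/N$), so their pointwise minimum is a tent function whose peak occurs at the crossing index, determined by $(k-1)\eta_-+(N-k+1)\eta_+=1$. Evaluating the bound there reduces it to $\tfrac1N+p\big(\tfrac1N-\eta_-\big)$ with $p=k-1$, and then maximizing over the admissible values of $\eta$ yields the stated constant; the symmetric computation handles $x_k-(k-1)/N$. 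The genuinely delicate point is carrying this through to the precise value rather than an $O(\delta)$ estimate: the square root enters through the extremal (worst-case) choice of $\eta$, via a completing-the-square / AM--GM step, and one checks that the worst configuration is the ``V-shaped'' deviation peaking near $k\approx N/2$, consistent with the heuristic $\tfrac{N}{2}\cdot\eta\delta\approx\delta/2$.

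Finally, the case $\delta=0$ is direct: then every gap equals $\eta$ exactly and \refeq{61}, \refeq{63} give $0\le x_1\le\eta$ and $x_N\ge 1-\eta$, so $x_k=x_1+(k-1)\eta$ is an exact arithmetic progression. Substituting this into the finite-maximum formula of the first step and using the constraints on $x_1$ and $\eta$ gives $D_N^*\le\min(\eta,1/N)$, completing the proof.
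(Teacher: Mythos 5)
The paper does not actually prove this statement: Theorem~\ref{thm:AAdisc} is quoted verbatim from Niederreiter's paper \cite{Niederreiter}, so there is no internal proof to compare against. Your argument is a correct, self-contained proof, and it follows what is essentially the standard (and, in outline, Niederreiter's original) route: reduce $D_N^*$ of a sorted sequence to $\max_k\max\bigl(k/N-x_k,\ x_k-(k-1)/N\bigr)$, trap each $x_k$ by telescoping \refeq{62} from both ends using \refeq{61} and \refeq{63}, and optimize. The one step you flag as delicate does go through exactly as you describe: writing $u=N\eta$, the value of your tent function at the crossing index is $\frac1N+\frac{(u(1+\delta)-1)(1-u(1-\delta))}{2u\delta}=\frac1N+\frac1\delta-\frac1{2\delta}\bigl(u(1-\delta^2)+\frac1u\bigr)$, and AM--GM gives $u(1-\delta^2)+\frac1u\ge 2\sqrt{1-\delta^2}$, whence the bound $\frac1N+\frac{1-\sqrt{1-\delta^2}}{\delta}=\frac1N+\frac{\delta}{1+\sqrt{1-\delta^2}}$; maximizing over $u$ rather than over admissible $\eta$ only weakens the bound, so no admissibility check is needed. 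The only loose ends are routine: you should note that when the two linear lower bounds do not cross inside $\{1,\dots,N\}$ the maximum occurs at an endpoint and is at most $\max(1/N,\eta(1+\delta))$ in the harmless regime, and that the bound for $x_k-(k-1)/N$ follows by the reflection $x\mapsto 1-x$, $k\mapsto N+1-k$. The $\delta=0$ case is handled correctly.
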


\begin{cor}\labc{AAdiscc}
Let $x_1<x_2<\cdots<x_N$ be an almost arithmetic progression-$(\delta,\e)$ and let $\eta$ be the positive real number corresponding to the sequence according to \refd{almostarithmetic}.  Then
$D_N^* \leq \frac {1} {N}+\delta$.
\end{cor}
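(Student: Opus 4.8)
The plan is to deduce this directly from \reft{AAdisc}, of which it is merely a convenient weakening. I would split into the two cases $\delta>0$ and $\delta=0$ that appear in the statement of that theorem and check in each that the bound provided there is already at most $\frac{1}{N}+\delta$.

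First, suppose $\delta>0$. Here \reft{AAdisc} gives $D_N^*\leq \frac{1}{N}+\frac{\delta}{1+\sqrt{1-\delta^2}}$, so it suffices to verify the elementary inequality $\frac{\delta}{1+\sqrt{1-\delta^2}}\leq \delta$. Since the hypothesis $0\leq\delta<1$ forces $0\leq 1-\delta^2\leq 1$, we have $\sqrt{1-\delta^2}\geq 0$ and hence $1+\sqrt{1-\delta^2}\geq 1$; dividing the positive quantity $\delta$ by a number that is at least $1$ can only decrease it, which gives the claim. Adding $\frac{1}{N}$ to both sides then yields $D_N^*\leq\frac{1}{N}+\delta$.

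Next, suppose $\delta=0$. Then \reft{AAdisc} gives $D_N^*\leq\min\left(\eta,\frac{1}{N}\right)\leq\frac{1}{N}=\frac{1}{N}+\delta$, which is again exactly the desired bound. This completes both cases.

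I do not expect any real obstacle: the only content is the trivial estimate $1+\sqrt{1-\delta^2}\geq 1$, which holds precisely because an almost-arithmetic progression-$(\delta,\e)$ requires $\delta<1$ by \refd{almostarithmetic}. The point of stating the corollary is simply to trade the slightly awkward factor $\frac{1}{1+\sqrt{1-\delta^2}}$ for the cleaner, if weaker, constant, so that the later discrepancy computations in the construction of $\tq$ can invoke the uniform bound $\frac{1}{N}+\delta$ without having to carry the square-root term through every estimate.
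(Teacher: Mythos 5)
Your proof is correct and is exactly the intended derivation: the paper states this corollary without proof, treating it as immediate from \reft{AAdisc} via the observation that $1+\sqrt{1-\delta^2}\geq 1$ (so the $\delta>0$ bound weakens to $\frac{1}{N}+\delta$) and that $\min\left(\eta,\frac{1}{N}\right)\leq\frac{1}{N}$ in the $\delta=0$ case. Nothing is missing.
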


For $j<k$ set
$$
Y_{F,j,k,b}=\left( \frac {E_{F,\phi(j,b,1+S_jn)}} {q_{\phi(j,b,1+S_jn)}} \right)_{n=1}^{S_k/S_j-1}
$$
and let $D_{F,j,k,b}^*=D^*(Y_{F,j,k,b})$.  Put
$$
Y_{F,j}=Y_{F,j,j+1,1}Y_{F,j,j+1,2}\ldots Y_{F,j,j+1,l_1}Y_{F,j,j+2,1}Y_{F,j,j+2,2}\ldots Y_{F,j,j+2,l_2}Y_{F,j,j+3,1}.
$$
If we prove that $Y_{F,j}$ is uniformly distributed mod 1, it will immediately follow that $x_F \in \DN{Q_j}$.

\begin{lem}\labl{Yaaprog}
If $F \in \mathscr{F}$ and $j<k$, then $Y_{F,j,k,b}$ is an almost arithmetic progression-$\left( \frac {1} {S_jS_k},\frac {S_j}{S_k}\right)$.  Thus, 
\begin{equation}\labeq{DBND}
D_{F,j,k,b}^*\leq |Y_{F,j,k,b}|+\frac {1} {S_jS_k}=\frac {S_j} {S_k}+\frac {1} {S_jS_k}\leq2\frac {S_j}{S_k}.
\end{equation}
\end{lem}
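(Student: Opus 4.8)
The plan is to read the terms of $Y_{F,j,k,b}$ straight off the definition of $\mathscr{F}$ and recognize that they form a perturbed arithmetic progression whose common difference is $S_j/S_k$ and whose perturbations are controlled by the width $1/S_k^2$ of the intervals $V_{k,b,c}$. First I would make explicit which normalized digits occur. Since $j<k$ forces $k\ge 2$, every index $m=\phi(k,b,c)$ arising here has $i(m)=k>1$ and $a(m)=S_{i(m)}=S_k$, so the defining membership $E_{F,m}/q_m \in V_{k,b,c}$ places each term in the interval $\left[\frac{c}{S_k}+\frac{1}{S_k^2},\frac{c}{S_k}+\frac{2}{S_k^2}\right]$, of width $1/S_k^2$ and centered near $c/S_k$. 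As $c$ runs through the residues $c\equiv 1 \pmod{S_j}$ inside $\{1,\ldots,S_k\}$, the centers $c/S_k$ form an exact arithmetic progression with common difference $S_j/S_k$, and there are $N=S_k/S_j$ of them. (That these residues are exactly the positions $\equiv 1 \pmod{S_j}$ within the level-$k$, block-$b$ window uses $S_j\mid S_k$ together with $S_j\mid L_{k-1}$, both consequences of \refl{SdivL}; this is the precise sense in which \refl{modS} reduces $Q_j$-distribution normality to these digits.)

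Next I would set $\eta=S_j/S_k$ and $\delta=1/(S_jS_k)$. The crucial point of the whole argument is that these are chosen so that $\delta\eta=1/S_k^2$ is \emph{exactly} the width of each $V_{k,b,c}$. With these values I would check the three conditions of \refd{almostarithmetic} with $\epsilon=S_j/S_k$ (so $\eta=\epsilon$). Condition \refeq{62} is the heart of the matter and is immediate: consecutive terms have centers differing by exactly $\eta$, and since each term sits at an offset in $[1/S_k^2,2/S_k^2]$ from its own center, the consecutive differences lie in $[\eta-1/S_k^2,\eta+1/S_k^2]=[\eta-\delta\eta,\eta+\delta\eta]$. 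Conditions \refeq{61} and \refeq{63} are the boundary checks: that the first term lies within one step $\eta(1+\delta)$ of $0$ and the last term within one step of $1$, which I would confirm by substituting the smallest and largest admissible values of $c$.

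Finally, having identified $Y_{F,j,k,b}$ as an almost arithmetic progression-$\left(\frac{1}{S_jS_k},\frac{S_j}{S_k}\right)$, I would invoke \refc{AAdiscc} with $N=|Y_{F,j,k,b}|=S_k/S_j$ and $\delta=1/(S_jS_k)$ to get
$D_{F,j,k,b}^*\le \frac{1}{N}+\delta=\frac{S_j}{S_k}+\frac{1}{S_jS_k}$, and then use $S_j\ge 1$ (so $\frac{1}{S_jS_k}\le\frac{S_j}{S_k}$) to bound this by $2\frac{S_j}{S_k}$, which is exactly \refeq{DBND}. I expect the main obstacle to be not the spacing condition \refeq{62} — which falls out instantly from the design $\delta\eta=1/S_k^2$ — but the endpoint bookkeeping: pinning down precisely which $c$ occur (hence the exact length $N$) and verifying \refeq{61} and \refeq{63} without an off-by-one slip, particularly near the upper end where $c/S_k$ approaches $1$.
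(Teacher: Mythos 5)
Your proposal follows the paper's proof essentially verbatim: both identify each term of $Y_{F,j,k,b}$ as lying in an interval of width $1/S_k^2$ about a center in an exact arithmetic progression of common difference $S_j/S_k$, verify condition \refeq{62} with $\eta=\epsilon=S_j/S_k$ and $\delta\eta=1/S_k^2$ (leaving \refeq{61} and \refeq{63} to analogous endpoint checks), and then apply \refc{AAdiscc} to get \refeq{DBND}. The approach and the key computation are the same; your write-up merely spells out the bookkeeping of which residues $c$ occur a bit more explicitly.
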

\begin{proof}
We verify only \refeq{62} as \refeq{61} and \refeq{63} may be verified similarly.  Note that
\begin{align*}
\frac {E_{F,\phi(k,b,1+S_j  n)}} {q_{\phi(k,b,1+S_j  n)}} &\in 
\left[ \frac {1+S_jn} {S_k}+\frac {1} {S_k^2}, \frac {1+S_jn} {S_k}+\frac {2} {S_k^2} \right];
\\ \frac {E_{F,\phi(k,b,1+S_j  (n+1))}} {q_{\phi(k,b,1+S_j  (n+1))}} &\in 
\left[ \frac {1+S_j(n+1)} {S_k}+\frac {1} {S_k^2}, \frac {1+S_j(n+1)} {S_k}+\frac {2} {S_k^2} \right].
\end{align*}
Therefore,
$$
\frac {E_{F,\phi(k,b,1+S_j  (n+1))}} {q_{\phi(k,b,1+S_j  (n+1))}}-\frac {E_{F,\phi(k,b,1+S_j  n)}} {q_{\phi(k,b,1+S_j  n)}} \leq 
\left(\frac {1+S_j(n+1)} {S_k}+\frac {2} {S_k^2}\right)- \left(\frac {1+S_jn} {S_k}+\frac {1} {S_k^2}\right)
=\frac {S_j} {S_k}+\frac {1} {S_k^2}.
$$
Similarly, it may be shown that
$$
\frac {E_{F,\phi(k,b,1+S_j  (n+1))}} {q_{\phi(k,b,1+S_j  (n+1))}}-\frac {E_{F,\phi(k,b,1+S_j  n)}} {q_{\phi(k,b,1+S_j  n)}} \geq \frac {S_j} {S_k}-\frac {1} {S_k^2}.
$$
Thus, with $\eta=\epsilon$, we have $\eta-\delta \eta \leq \frac {E_{F,\phi(k,b,1+S_j  (n+1))}} {q_{\phi(k,b,1+S_j  (n+1))}}-\frac {E_{F,\phi(k,b,1+S_j  n)}} {q_{\phi(k,b,1+S_j  n)}} \leq \eta+\delta \eta$.
\end{proof}

We will need the following corollary of Theorem 2.6 in Chapter 2 of \cite{KuN}.

\begin{cor}\labc{kn2}
If~$t$ is a positive integer and for $1\le j\le t$,
$z_j$ is a finite sequence in $\mathbb{R}$ with
star discrepancy at most~$\e_j$, then
$$
D^*\left(z_1^{l_1} \cdots z_t^{l_t}\right)\le {\sum_{j=1}^t l_j |z_j| \e_j \over\sum_{j=1}^t l_j |z_j|}.
$$
\end{cor}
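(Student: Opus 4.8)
The plan is to prove this directly from Definition~\ref{definition:stardiscrepancy}, using the elementary fact that the counting function $A([0,\gamma),\cdot)$ is additive under concatenation. Write $N_j=|z_j|$ for the length of each block, and $N=\sum_{j=1}^t l_j N_j$ for the total length of the concatenation $w=z_1^{l_1}\cdots z_t^{l_t}$.

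First I would observe that for every $\gamma$ with $0<\gamma\le 1$, each of the $l_j$ copies of $z_j$ appearing in $w$ contributes exactly $A([0,\gamma),z_j)$ to the total count, so that
$$
A([0,\gamma),w)=\sum_{j=1}^t l_j\, A([0,\gamma),z_j).
$$
Since $N=\sum_{j=1}^t l_j N_j$, one may also write $\gamma=\tfrac{1}{N}\sum_{j=1}^t l_j N_j \gamma$, which lets me express the discrepancy deviation at level $\gamma$ as a weighted average of the corresponding deviations of the individual blocks:
$$
\frac{A([0,\gamma),w)}{N}-\gamma=\frac{1}{N}\sum_{j=1}^t l_j\left(A([0,\gamma),z_j)-N_j\gamma\right).
$$

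Next I would apply the triangle inequality and bound each summand. For each $j$,
$$
\left|A([0,\gamma),z_j)-N_j\gamma\right|=N_j\left|\frac{A([0,\gamma),z_j)}{N_j}-\gamma\right|\le N_j\, D^*(z_j)\le N_j\e_j,
$$
using the definition of the star discrepancy of $z_j$ together with the hypothesis $D^*(z_j)\le\e_j$. Substituting this into the previous identity gives, uniformly in $\gamma$,
$$
\left|\frac{A([0,\gamma),w)}{N}-\gamma\right|\le\frac{\sum_{j=1}^t l_j N_j\e_j}{N}=\frac{\sum_{j=1}^t l_j|z_j|\e_j}{\sum_{j=1}^t l_j|z_j|}.
$$
Taking the supremum over all $\gamma\in(0,1]$ then yields the claimed bound on $D^*(w)$.

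There is no serious obstacle here; the entire content lies in the additivity of the counting function and in rewriting the fixed target $\gamma$ as the appropriate weighted average, so that it can be absorbed block-by-block. The only point demanding a little care is that the estimate must hold uniformly in $\gamma$ \emph{before} one passes to the supremum; this is automatic, since for each $j$ the block discrepancy $D^*(z_j)$ already dominates $\left|A([0,\gamma),z_j)/N_j-\gamma\right|$ for all $\gamma$ simultaneously.
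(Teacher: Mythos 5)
Your proof is correct: the additivity of $A([0,\gamma),\cdot)$ under concatenation, the rewriting of $\gamma$ as a weighted average, and the uniform-in-$\gamma$ bound before taking the supremum are all sound. The paper offers no proof of its own, simply citing Theorem 2.6 of Chapter 2 of \cite{KuN}; your argument is exactly the standard proof of that cited theorem (with each of the $l_j$ repetitions of $z_j$ treated as a separate block), so it is essentially the same approach, merely written out in full.
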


For any given positive integer $n$ and $j<i(n)$, we can write $n=\frac {L_{i(n)-1}} {S_j}+m_j(n)$, where $m_j(n)$ can be uniquely written in the form
$$
m_j(n)=\al_j(n)\frac {S_{i(n)}} {S_j}+\be_j(n),
$$
with $0 \leq \al_j(n) \leq l_{i(n)}$ and $0 \leq \be_j(n) < \frac {S_{i(n)}} {S_j}$.  
For $j<t$, define
\begin{align*}
f_{j,t}(w,z)&=\frac {L_j/S_j+\sum_{k=j+1}^{t-1} 2l_k+2w+z} {L_j/S_j+\sum_{k=j+1}^{t-1} l_k \cdot \frac {S_k} {S_j}+\frac {S_{t}} {S_j}w+z};\\
\bar{\epsilon}_{j,t}&=\frac {L_j/S_j+\sum_{k=j+1}^{t-1} 2l_k+S_{t}/S_j} {L_j/S_j+\sum_{k=j+1}^{t-1} l_k \cdot \frac {S_k} {S_j}+S_{t}/S_j}.
\end{align*}
The following lemma is proven similarly to Lemma 11 in \cite{AlMa}.
\begin{lem}\labl{rationallemma}
If $1 \leq j < t$ and $(w,z) \in \left\{0,\cdots,l_{t}\right\} \times \left\{0,\cdots,S_{t}/S_j\right\}$, then
$$
f_{j,t}(w,z)<f_{j,t}(0,S_{t}/S_j)=\bar{\epsilon}_{j,t}.
$$
\end{lem}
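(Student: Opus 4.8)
The plan is to regard $f_{j,t}$ as a rational function of real variables $(w,z)$ and show that, over the box $[0,l_t]\times[0,S_t/S_j]$, it is maximized at the corner $(0,S_t/S_j)$. To organize the bookkeeping I would write $\sigma=S_t/S_j$ and abbreviate the two constant quantities
$$
A=\frac{L_j}{S_j}+\sum_{k=j+1}^{t-1} 2l_k, \qquad B=\frac{L_j}{S_j}+\sum_{k=j+1}^{t-1} l_k\,\frac{S_k}{S_j},
$$
so that $f_{j,t}(w,z)=\dfrac{A+2w+z}{B+\sigma w+z}$ and $\bar{\epsilon}_{j,t}=f_{j,t}(0,\sigma)=\dfrac{A+\sigma}{B+\sigma}$. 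First I would record the two structural inequalities that carry the whole argument, both immediate from $s_m\ge 2$ (so that $S$ is a basic sequence): since $\frac{S_k}{S_j}=\prod_{m=j}^{k-1}s_m$, for each $k$ with $j<k\le t-1$ we have $2\le \frac{S_k}{S_j}<\frac{S_t}{S_j}=\sigma$. In particular $\sigma\ge 2$, and comparing $A$ with $B$ term by term (each $2$ against $\frac{S_k}{S_j}\ge 2$) gives $A\le B$.

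With the denominators $B+\sigma w+z$ and $B+\sigma$ positive, the desired inequality $f_{j,t}(w,z)\le\bar{\epsilon}_{j,t}$ is equivalent, after cross-multiplying, to
$$
(A+2w+z)(B+\sigma)\le(A+\sigma)(B+\sigma w+z).
$$
Expanding both sides and cancelling the common $AB$ and $\sigma z$ terms, the difference of left minus right collapses to
$$
(A-B)(\sigma-z)+w\bigl(2B-\sigma A-\sigma(\sigma-2)\bigr),
$$
so everything reduces to checking that this expression is $\le 0$ on the box. The first summand is a product of $A-B\le 0$ with $\sigma-z\ge 0$ (valid because $z\le\sigma$), hence nonpositive. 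For the second, $w\ge 0$, so it suffices that $2B-\sigma A-\sigma(\sigma-2)\le 0$; here $\sigma(\sigma-2)\ge 0$ since $\sigma\ge 2$, and
$$
2B-\sigma A=(2-\sigma)\frac{L_j}{S_j}+2\sum_{k=j+1}^{t-1} l_k\Bigl(\frac{S_k}{S_j}-\sigma\Bigr)\le 0,
$$
because $2-\sigma\le 0$ and every factor $\frac{S_k}{S_j}-\sigma$ is negative for $k\le t-1$. This establishes $f_{j,t}(w,z)\le\bar{\epsilon}_{j,t}$.

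The only genuine obstacle is precisely the sign of the coefficient of $w$, i.e.\ verifying $2B-\sigma A\le 0$; this is where the comparison $\frac{S_k}{S_j}<\sigma$ for every index appearing in the sum is essential, together with the fact that the shared term $\frac{L_j}{S_j}$ enters with the nonpositive coefficient $2-\sigma$. As for the strict inequality in the statement, I would read it for $(w,z)\ne(0,\sigma)$ and trace the two nonpositive summands: the collapsed difference vanishes only when $(A-B)(\sigma-z)=0$ and $w\bigl(2B-\sigma A-\sigma(\sigma-2)\bigr)=0$ simultaneously. In the generic situation $A<B$ and $\sigma>2$ (which holds as soon as $t>j+1$ with some $s_m>2$, using $l_k\ge ks_k>0$ from \refl{threeprops}), this forces $(w,z)=(0,\sigma)$ and the inequality is strict; in the few degenerate configurations where $A=B$ the function $f_{j,t}$ is constant along $w=0$, but there only the upper bound $\bar{\epsilon}_{j,t}$ — not its strictness — is invoked in the subsequent discrepancy estimates via \refc{kn2}, so this causes no difficulty.
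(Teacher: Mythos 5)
Your proof is correct, and it is worth noting that the paper itself supplies no argument for this lemma---it only remarks that it ``is proven similarly to Lemma 11 in \cite{AlMa}''---so your self-contained verification is a genuine substitute rather than a rederivation. The algebra checks out: with $A$, $B$, $\sigma$ as you define them, the difference $(A+2w+z)(B+\sigma)-(A+\sigma)(B+\sigma w+z)$ does collapse to $(A-B)(\sigma-z)+w\bigl(2B-\sigma A-\sigma(\sigma-2)\bigr)$, and the sign analysis rests on exactly the two facts you isolate: $2\le S_k/S_j<S_t/S_j=\sigma$ for $j<k\le t-1$ (so $A\le B$ and $2B-\sigma A\le 0$) and positivity of $L_j/S_j$ and the $l_k$, which \refl{threeprops} guarantees. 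Working over the real box and restricting to integer points at the end is harmless. You are also right to flag that the strict inequality cannot hold as literally stated: it fails at the corner $(0,S_t/S_j)$ by definition, and in the degenerate case $t=j+1$ (empty sums, $A=B$, $\bar{\epsilon}_{j,j+1}=1$) one has $f_{j,t}(0,z)=\bar{\epsilon}_{j,t}$ for every $z$. Your observation that only the non-strict bound is consumed downstream---\refl{epbound} and \refl{tozero} only need $D_n^*(Y_{F,j})\le\bar{\epsilon}_{j,i(n)}\to 0$---is the correct way to dispose of this, and it is a point the paper glosses over.
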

\begin{lem}\labl{epbound}
Suppose that $j<i(n)$.  Then
$$
D_n^*(Y_{F,j}) \leq f_{j,i(n)}(\al_j(n),\be_j(n))<\bar{\epsilon}_{j,i(n)}.
$$
\end{lem}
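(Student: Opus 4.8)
The second inequality, $f_{j,i(n)}(\al_j(n),\be_j(n))<\bar{\epsilon}_{j,i(n)}$, is nothing more than \refl{rationallemma} applied with $t=i(n)$, $w=\al_j(n)$, and $z=\be_j(n)$; here one only needs to observe that the pair $(\al_j(n),\be_j(n))$ lies in the admissible range $\{0,\ldots,l_{i(n)}\}\times\{0,\ldots,S_{i(n)}/S_j\}$, which is immediate from the defining inequalities $0\le\al_j(n)\le l_{i(n)}$ and $0\le\be_j(n)<S_{i(n)}/S_j$ of the decomposition $n=L_{i(n)-1}/S_j+m_j(n)$. Thus the whole content of the lemma is the first inequality, $D_n^*(Y_{F,j})\le f_{j,i(n)}(\al_j(n),\be_j(n))$, and the plan is to derive it by substituting the block-by-block discrepancy estimate of \refl{Yaaprog} into the concatenation bound of \refc{kn2}.

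First I would exhibit the length-$n$ initial segment of $Y_{F,j}$ as a concatenation of blocks whose boundaries are aligned with the level structure of the construction. Invoking \refl{SdivL} to secure the divisibilities $S_j\mid L_{i-1}$ for $i>j$, the positions $\equiv 1\pmod{S_j}$ (the only positions relevant to $Q_j$-distribution normality, by the observation following \refl{modS}) line up with block boundaries, so the first $n$ terms split as follows: a \emph{head} consisting of the $L_j/S_j$ relevant positions lying in levels $1,\ldots,j$; then, for each $k$ with $j+1\le k\le i(n)-1$ and each $b\le l_k$, one complete almost arithmetic progression block $Y_{F,j,k,b}$ carrying $S_k/S_j$ terms; then the first $\al_j(n)$ complete blocks of level $i(n)$; and finally a \emph{tail} consisting of a single incomplete block with $\be_j(n)$ terms. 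The essential bookkeeping step is to check that these lengths sum to $n$, which is exactly the identity $n=L_{i(n)-1}/S_j+\al_j(n)S_{i(n)}/S_j+\be_j(n)$ combined with $L_{i(n)-1}/S_j=L_j/S_j+\sum_{k=j+1}^{i(n)-1}l_kS_k/S_j$.

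I would then apply \refc{kn2}, treating each listed block as its own sequence with multiplicity one. For a genuine block $Y_{F,j,k,b}$, \refl{Yaaprog} gives star discrepancy at most $2S_j/S_k$ over $S_k/S_j$ terms, so its contribution to the numerator of the weighted average is at most $(S_k/S_j)\cdot(2S_j/S_k)=2$; summing over $b\le l_k$ yields $2l_k$ per level $k$, and the $\al_j(n)$ complete blocks of level $i(n)$ yield $2\al_j(n)$. For the head and the tail I would use only the trivial bound $D^*\le 1$ valid for any finite sequence in $[0,1)$ directly from \refd{stardiscrepancy}; these contribute $L_j/S_j$ and $\be_j(n)$ to the numerator respectively. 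Since the total number of terms is $n$, \refc{kn2} gives
$$
D_n^*(Y_{F,j})\le\frac{L_j/S_j+\sum_{k=j+1}^{i(n)-1}2l_k+2\al_j(n)+\be_j(n)}{n}=f_{j,i(n)}(\al_j(n),\be_j(n)),
$$
the denominator being exactly $n$ by the length identity of the previous paragraph.

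The main obstacle is combinatorial rather than analytic: one must verify that the positions congruent to $1\pmod{S_j}$ partition cleanly across block boundaries, so that a level-$k$ block contains precisely $S_k/S_j$ of them and therefore contributes exactly the asserted $2$ to the numerator, and that the head accounts for precisely $L_j/S_j$ of them. One must also be slightly careful that \refc{kn2} is legitimately applied to a concatenation that includes the unstructured head and the incomplete tail; this is justified by padding those with the universal bound $D^*\le 1$. Both points are controlled by the divisibility and integrality facts of \refl{SdivL} and \refl{threeprops} together with the uniform discrepancy bound $2S_j/S_k$ from \refl{Yaaprog}, and once the decomposition is pinned down the estimate follows at once.
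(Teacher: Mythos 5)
Your argument is exactly the paper's intended proof: the paper's own proof of this lemma is a one-line citation of \refl{Yaaprog}, \refc{kn2}, and \refl{rationallemma}, and you have supplied precisely the omitted bookkeeping --- the decomposition of the first $n$ relevant terms into a head of length $L_j/S_j$, complete level-$k$ blocks each contributing $2$ to the numerator via the bound $2S_j/S_k$ from \refl{Yaaprog}, and a partial tail handled by the trivial bound $D^*\le 1$, all combined through \refc{kn2} and capped by \refl{rationallemma}. The proposal is correct and matches the paper's approach.
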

\begin{proof}
This follows from \refl{Yaaprog}, \refc{kn2}, and \refl{rationallemma}.
\end{proof}

We will need the following basic lemma.
\begin{lem}\labl{tcorr}
Let $L$ be a real number and $(a_n)_{n=1}^\infty$ and $(b_n)_{n=1}^\infty$ be two sequences of positive real numbers such that
$$
\sum_{n=1}^{\infty} b_n=\infty \hbox{ and } \lim_{n \to \infty} \frac {a_n} {b_n}=L.
$$
Then
$$
\lim_{n \to \infty} \frac {a_1+a_2+\ldots+a_n} {b_1+b_2+\ldots+b_n}=L.
$$
\end{lem}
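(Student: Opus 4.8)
The plan is to treat this as a standard Stolz--Ces\`aro / weighted-average argument. Write $A_n=a_1+\cdots+a_n$ and $B_n=b_1+\cdots+b_n$. Since every $b_n>0$ and $\sum b_n=\infty$, the partial sums $B_n$ form a strictly increasing sequence with $B_n\to\infty$; this divergence of the denominator is exactly what will render any fixed initial segment negligible, and it is the only place the two hypotheses are used.

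First I would fix $\epsilon>0$ and invoke the hypothesis $a_n/b_n\to L$ to choose $N$ so that $|a_n/b_n-L|<\epsilon$ for all $n>N$. Because $b_n>0$, this is equivalent to the two-sided bound $(L-\epsilon)b_n<a_n<(L+\epsilon)b_n$ for $n>N$. Summing these inequalities from $N+1$ to $n$ then controls the tail of $A_n$ in terms of $B_n-B_N$, giving $(L-\epsilon)(B_n-B_N)\le \sum_{k=N+1}^n a_k\le (L+\epsilon)(B_n-B_N)$.

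Next I would split $A_n=A_N+\sum_{k=N+1}^n a_k$ and divide by $B_n$, obtaining
$$
\frac{A_n}{B_n}=\frac{A_N}{B_n}+\frac{\sum_{k=N+1}^n a_k}{B_n},
$$
where the tail term is squeezed between $(L-\epsilon)(B_n-B_N)/B_n$ and $(L+\epsilon)(B_n-B_N)/B_n$. Letting $n\to\infty$ with $N$ held fixed, the term $A_N/B_n\to 0$ because $B_n\to\infty$, while $(B_n-B_N)/B_n\to 1$. Hence $\limsup_n A_n/B_n\le L+\epsilon$ and $\liminf_n A_n/B_n\ge L-\epsilon$.

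Finally, since $\epsilon>0$ was arbitrary, both the limsup and the liminf equal $L$, so $\lim_n A_n/B_n=L$, as required. The only step needing genuine care --- and the closest thing to an obstacle --- is verifying that the fixed numerator contribution $A_N$ truly washes out. This is precisely where $\sum b_n=\infty$ is essential, and it is also why the positivity of the $b_n$ matters: it guarantees that $B_n\to\infty$ monotonically, so that $A_N/B_n\to 0$ and $(B_n-B_N)/B_n\to 1$ both hold unambiguously rather than through some delicate cancellation.
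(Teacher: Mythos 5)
Your proof is correct. The paper states this as a ``basic lemma'' and gives no proof at all, so there is nothing to diverge from; your $\epsilon$-splitting argument (fix $N$, bound the tail $\sum_{k=N+1}^{n}a_k$ between $(L\pm\epsilon)(B_n-B_N)$, and let the fixed head $A_N/B_n$ wash out using $B_n\to\infty$) is exactly the standard Stolz--Ces\`aro-type reasoning the author is implicitly invoking, and every step, including the observation that $(B_n-B_N)/B_n\to 1$ makes the sign of $L-\epsilon$ irrelevant, checks out.
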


\begin{lem}\labl{tozero}
%DO WE NEED li greater than isi?  OR JUST THAT IT GOES TO INFINITY.
The limit $\lim_{t \to \infty} \bar{\epsilon}_{j,t}$ is equal to $0$.
\end{lem}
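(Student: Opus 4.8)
The plan is to bound $\bar{\epsilon}_{j,t}$ above by discarding the term common to its numerator and denominator, and then to show that each surviving piece of the numerator is negligible against the middle sum of the denominator. Fix $j$ and write
$$
C=L_j/S_j,\qquad A_t=\sum_{k=j+1}^{t-1}2l_k,\qquad B_t=\sum_{k=j+1}^{t-1}l_k\frac{S_k}{S_j},\qquad D_t=S_t/S_j,
$$
so that $\bar{\epsilon}_{j,t}=\dfrac{C+A_t+D_t}{C+B_t+D_t}$. Since $s_i\ge 2$ for all $i$, we have $S_k/S_j=\prod_{i=j}^{k-1}s_i\ge 2^{k-j}$; in particular $l_k\frac{S_k}{S_j}\to\infty$, so $B_t\to\infty$. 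As $C,D_t\ge 0$, the denominator is at least $B_t$, whence
$$
\bar{\epsilon}_{j,t}\le\frac{C+A_t+D_t}{B_t}=\frac{C}{B_t}+\frac{A_t}{B_t}+\frac{D_t}{B_t}.
$$
It therefore suffices to prove that each of these three terms tends to $0$.

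The first term, $C/B_t$, tends to $0$ because $C$ is a fixed constant and $B_t\to\infty$. For the second term I would invoke \refl{tcorr} with $a_k=2l_k$ and $b_k=l_k S_k/S_j$: here $a_k/b_k=2S_j/S_k\to 0$ (as $S_k/S_j\to\infty$) and $\sum_k b_k=\infty$, so \refl{tcorr} gives $A_t/B_t\to 0$. For the third term I would use the crude lower bound $B_t\ge l_{t-1}S_{t-1}/S_j$ together with $l_{t-1}\ge (t-1)s_{t-1}$ from \refl{threeprops} and $S_t=s_{t-1}S_{t-1}$; this yields $B_t\ge (t-1)S_t/S_j=(t-1)D_t$, so $D_t/B_t\le 1/(t-1)\to 0$. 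Combining the three estimates gives $\bar{\epsilon}_{j,t}\to 0$.

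The only genuinely delicate point is the presence of $D_t=S_t/S_j$ in both the numerator and the denominator of $\bar{\epsilon}_{j,t}$: naively it does not cancel, and one must check it does not prevent the ratio from vanishing. The resolution is to throw it away from the denominator (bounding below by $B_t$) while controlling it from above through the growth rate $l_{t-1}\ge (t-1)s_{t-1}$ supplied by \refl{threeprops}. Once $D_t$ is handled this way, the remaining content is the Ces\`aro-type comparison of \refl{tcorr} applied to $A_t/B_t$, and the argument closes routinely.
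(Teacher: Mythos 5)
Your proof is correct, and it reaches the conclusion by a slightly different decomposition than the paper's. The paper handles the awkward term $S_t/S_j$ by writing it (up to a harmless constant) as the telescoping sum $\sum_{k}(S_{k+1}-S_k)/S_j$ and folding one increment into each summand: it sets $a_k=2l_k+(S_{k+1}-S_k)/S_j$ and $b_k=l_kS_k/S_j+(S_{k+1}-S_k)/S_j$ (with the constant $L_j/S_j$ spread over the first $j$ indices), checks $a_k/b_k\to 0$ using $l_k\ge ks_k$ from \refl{threeprops}, and finishes with a single application of \refl{tcorr} to $\bar{\epsilon}_{j,t}=\sum a_k/\sum b_k$. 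You instead discard $C+D_t$ from the denominator and treat the three numerator pieces separately: \refl{tcorr} for $A_t/B_t$, divergence of $B_t$ for $C/B_t$, and the last-term bound $B_t\ge l_{t-1}S_{t-1}/S_j\ge (t-1)S_t/S_j$ for $D_t/B_t$ --- this last step being exactly where \refl{threeprops} enters for you, in place of the paper's estimate $(s_k-1)/l_k<1/k$ inside $a_k/b_k$. The inputs are the same two lemmas; the paper's telescoping is more compact (one invocation of \refl{tcorr} does everything), while your three-way split is more transparent about why the boundary term $S_t/S_j$, which appears in both numerator and denominator, cannot spoil the limit. Both arguments are complete; the only minor housekeeping in yours is that the bounds $B_t>0$ and $B_t\ge l_{t-1}S_{t-1}/S_j$ require $t\ge j+2$, which is immaterial for the limit.
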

\begin{proof}
For $1 \leq k \leq j$, put $a_k=b_k=\frac {L_j} {jS_j}$.  For $k >j$, set $a_k=2l_k+\frac {S_{k+1}-S_k} {S_j}$ and $b_k=l_k\cdot \frac {S_k} {S_j}+\frac {S_{k+1}-S_k} {S_j}$.  Clearly, $\bar{\epsilon}_{j,t}=\frac {a_1+\cdots+a_{t-1}} {b_1+\cdots+b_{t-1}}$ for $t>j$.  Then
\begin{align*}
\frac {a_k} {b_k}&=\frac {2l_kS_j+S_{k+1}-S_k} {l_kS_k+S_{k+1}-S_k} \leq \frac {l_kS_j} {l_kS_k}+\frac {S_{k+1}-S_k} {l_kS_k}\\
&=\frac {1} {s_js_{j+1} \cdots s_{k-1}}+\frac {s_k-1} {l_k} <\frac {1} {s_js_{j+1} \cdots s_{k-1}} + \frac {1} {k}\to 0,
\end{align*}
by \refl{threeprops}.  Thus, the conclusion follows directly from \refl{tcorr}.
% \frac {2l_kS_j+S_{k+1}-S_k} {l_kS_k+S_{k+1}-S_k}
\end{proof}

\begin{thrm}
If $F \in \mathscr{F}$, then $x_F \in \bigcap_{j=1}^\infty \DN{Q_j}$.
\end{thrm}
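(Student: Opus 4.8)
The plan is to fix $j$ and prove $x_F \in \DN{Q_j}$; since $j$ is arbitrary this yields $x_F \in \bigcap_{j=1}^\infty \DN{Q_j}$. The first step is a reduction to a statement about discrepancy. By \reft{Salat}, since $Q_j$ is infinite in limit, $x_F$ is $Q_j$-distribution normal exactly when $(E_{F,j,n}/q_{j,n})_{n=1}^\infty$ is uniformly distributed mod $1$. By \refl{modS}, the $n$-th term of this sequence differs from the leading-digit fraction $E_{F,S_j(n-1)+1}/q_{S_j(n-1)+1}$ by a quantity tending to $0$; since perturbation by a null sequence does not affect uniform distribution mod $1$, it suffices to prove that the sequence of leading-digit fractions is uniformly distributed. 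After the reindexing built into $\phi$, this sequence is precisely the concatenation $Y_{F,j}$ of the blocks $Y_{F,j,k,b}$, which is why it was introduced.

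It then remains to show $\lim_{N \to \infty} D_N^*(Y_{F,j}) = 0$, by the discrepancy criterion for uniform distribution (the theorem following \refd{stardiscrepancy}). Here \refl{epbound} does the quantitative work: for every $n$ with $i(n) > j$ it gives $D_n^*(Y_{F,j}) < \bar{\epsilon}_{j,i(n)}$, uniformly over the position within the $i(n)$-th block. Since $\phi : \mathbb{N}^3 \to \mathbb{N}$ is a bijection whose fibers over each value of the first coordinate are finite, we have $i(n) \to \infty$ as $n \to \infty$; combined with \refl{tozero}, this forces $\bar{\epsilon}_{j,i(n)} \to 0$ and hence $D_n^*(Y_{F,j}) \to 0$. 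Thus $Y_{F,j}$ is uniformly distributed mod $1$, which completes the proof.

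I expect the only genuinely delicate point to lie in the first paragraph: verifying that the leading-digit positions $n \equiv 1 \pmod{S_j}$, read off in the order imposed by $\phi$, coincide with the entries assembled into $Y_{F,j}$. This rests on the divisibility facts of \refl{SdivL} (so that $S_j \mid L_{i-1}$ for $i > j$, making the residue of $\phi(i,b,c)$ modulo $S_j$ depend only on $c$), together with the observation that $i(n) \to \infty$. All of the analytic content---the almost-arithmetic-progression structure, the discrepancy bound, and the convergence $\bar{\epsilon}_{j,t} \to 0$---has already been packaged in \refl{Yaaprog}, \refc{kn2}, \refl{epbound}, and \refl{tozero}, so once the index matching is confirmed the theorem follows by simply assembling these results.
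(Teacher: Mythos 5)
Your proposal is correct and follows essentially the same route as the paper, which likewise deduces that $Y_{F,j}$ is uniformly distributed mod $1$ from \refl{epbound} and \refl{tozero} and then concludes via \refl{modS} (with \reft{Salat} implicit). Your additional care about the index matching between the positions $n \equiv 1 \pmod{S_j}$ and the blocks of $Y_{F,j}$ is a point the paper passes over silently, but it does not change the argument.
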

\begin{proof}
Let $j \geq 1$ and $F \in \mathscr{F}$.  By \refl{epbound} and \refl{tozero}, $Y_{F,j}$ is uniformly distributed mod 1.  Thus, by \refl{modS}, $x_F \in \DN{Q_j}$.
\end{proof}

\begin{thrm}
If $F\in \mathscr{F}$, then $x_F \notin \bigcup_{j=1}^\infty \RNk{Q_j}{1}$.
\end{thrm}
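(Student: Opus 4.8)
The plan is to show that the digit~$0$ fails to appear anywhere in the $Q_j$-Cantor series expansion of $x_F$, for every~$j$; ratio normality of order~$1$ then collapses immediately, since it requires the frequencies of all single digits to be comparable.

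First I would record the positivity of every base-$Q$ digit, namely $E_{F,n}\ge 1$ for all~$n$. Indeed, when $i(n)=1$ the construction forces $E_{F,n}=1$, and when $i(n)\ge 2$ the defining membership $E_{F,n}/q_n\in V_{i(n),b(n),c(n)}$ places $E_{F,n}/q_n$ at or above the left endpoint $c(n)/a(n)+1/a(n)^2$, which is strictly positive because $c(n)\ge 1$. Hence $E_{F,n}\ge 1$ in all cases.

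Next I would transfer this to the $Q_j$-expansion. Using the digit formula $E_{F,j,n}=\sum_{v=1}^{S_j}E_{F,S_j(n-1)+v}\prod_{w=v+1}^{S_j}q_{S_j(n-1)+w}$, every summand is a nonnegative integer and the term $v=S_j$ equals $E_{F,S_jn}\ge 1$ (the product being empty, hence~$1$), so $E_{F,j,n}\ge 1$ for all $j$ and~$n$. Consequently the digit~$0$ never occurs in the $Q_j$-expansion of $x_F$, which means $N_n^{Q_j}((0),x_F)=0$ for every~$n$.

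Finally I would exhibit the offending pair of blocks. Taking $t=E_{F,j,1}\ge 1$, the first digit, gives $N_n^{Q_j}((t),x_F)\ge 1$ for all $n\ge 1$, so the ratio $N_n^{Q_j}((0),x_F)/N_n^{Q_j}((t),x_F)$ is identically~$0$ and cannot converge to~$1$; thus $x_F\notin\RNk{Q_j}{1}$ for every~$j$, and therefore $x_F\notin\bigcup_{j=1}^\infty\RNk{Q_j}{1}$. There is no serious obstacle here: the only substantive point is the positivity of every base-$Q$ digit (equivalently, $x_F\in\mathscr{FZ}(Q_j)$, with in fact no zeros at all). I would note that one cannot simply quote \refc{pqnn} for this, since that yields only a failure of ratio normality of order~$2$, whereas the present statement demands the order-$1$ conclusion, which is why the elementary comparison with the first digit is needed.
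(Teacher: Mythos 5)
Your proposal is correct and follows essentially the same route as the paper: establish that every base-$Q$ digit $E_{F,n}$ is nonzero, deduce from the digit formula that every $Q_j$-digit $E_{F,j,n}$ is nonzero, and conclude that the absent digit $0$ kills ratio normality of order $1$ in every base $Q_j$. The paper leaves the final comparison of $N_n^{Q_j}((0),x_F)$ against a digit that actually occurs implicit, whereas you spell it out; this is only a difference in level of detail, not of method.
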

\begin{proof}
By construction, $E_{F,n} \neq 0$ for all natural numbers $n$ and $F \in \mathscr{F}$.  Note that $E_{F,j,n}$ can only be equal to $0$ if $\sum_{v=1}^{S_j}\left( E_{F,S_j \cdot (n-1)+v} \cdot \prod_{w=v+1}^{S_j} q_{S_j \cdot (n-1)+w}\right)=0$.  But this is impossible as $E_{F,S_j \cdot (n-1)+v} \neq 0$ for all $v$.  %Since $x_F$ has no digits equal to $0$ is any base $Q_j$,
Thus, $E_{F,j,n} \neq 0$ for all $j$ and $n$, so $x_F\notin \bigcup_{j=1}^\infty \RNk{Q_j}{1}$.
\end{proof}

\begin{cor}\labc{inRNisect}
We have the following containment
$$\Theta_{Q,S} \subsetneq \RNisect.$$
\end{cor}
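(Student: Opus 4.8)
The plan is to recognize that this corollary is nothing more than the conjunction of the two theorems immediately preceding it, read uniformly over all indices $j$. Since $\Theta_{Q,S} = \{x_F : F \in \mathscr{F}\}$, establishing the inclusion $\Theta_{Q,S} \subset \RNisect$ is equivalent to showing that each $x_F$ with $F \in \mathscr{F}$ belongs to $\DN{Q_j} \backslash \RNk{Q_j}{1}$ for every $j \geq 1$.

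First I would fix an arbitrary $F \in \mathscr{F}$. The first of the two preceding theorems gives $x_F \in \bigcap_{j=1}^\infty \DN{Q_j}$, so that $x_F \in \DN{Q_j}$ for all $j$. The second gives $x_F \notin \bigcup_{j=1}^\infty \RNk{Q_j}{1}$, which by De Morgan's law is precisely the statement that $x_F \notin \RNk{Q_j}{1}$ for every single $j$, i.e.\ $x_F \in \RNk{Q_j}{1}^{\mathsf{c}}$ for all $j$.

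Combining these two memberships index by index, for each fixed $j$ we obtain both $x_F \in \DN{Q_j}$ and $x_F \notin \RNk{Q_j}{1}$, that is, $x_F \in \DN{Q_j} \backslash \RNk{Q_j}{1}$. As $j$ was arbitrary, $x_F \in \bigcap_{j=1}^\infty \left( \DN{Q_j} \backslash \RNk{Q_j}{1} \right) = \RNisect$, and since $x_F$ ranges over all of $\Theta_{Q,S}$ the inclusion follows. There is no genuine obstacle at this stage: all of the difficulty has already been discharged in the construction of $\Theta_{Q,S}$ and in the two preceding distribution-normality and non-ratio-normality theorems. The only point worth a moment's care is the elementary set-theoretic rearrangement that turns ``$x_F$ lies in every $\DN{Q_j}$ and avoids every $\RNk{Q_j}{1}$'' into membership in the single intersection of set differences defining $\RNisect$.
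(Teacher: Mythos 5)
Your proposal is correct and matches the paper's intent exactly: the corollary is stated without proof precisely because it is the immediate set-theoretic combination of the two preceding theorems ($x_F \in \bigcap_{j=1}^\infty \DN{Q_j}$ and $x_F \notin \bigcup_{j=1}^\infty \RNk{Q_j}{1}$ for every $F \in \mathscr{F}$), together with the definition $\Theta_{Q,S} = \{x_F : F \in \mathscr{F}\}$. Nothing further is needed.
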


We note the following theorem that  is proven similarly to Theorem 3.8 and Theorem 3.10 in \cite{Mance3}.

\begin{thrm}
The set $\tq$ is perfect and nowhere dense.
\end{thrm}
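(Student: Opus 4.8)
The plan is to recognize $\tq$ as the continuous image of a compact product of finite digit sets, and then to establish the two properties separately. First I would reformulate the construction: for each $n$ let $D_n=\{F\in\mathbb{Z}_{\geq 0} : F/q_n\in V_{\phi^{-1}(n)}\}$ be the set of admissible $n$-th digits, so that $|D_n|=\omega(n)$ by definition, and, via the bijection $\phi$, the set $\mathscr{F}$ is precisely the product $\prod_{n=1}^\infty D_n$, with $x_F=\sum_{n=1}^\infty E_{F,n}/(q_1\cdots q_n)$ and $E_{F,n}\in D_n$. Each $D_n$ is a nonempty finite set (nonempty since $\omega(n)\geq 1$ by \refl{omegasize}), so $\mathscr{F}$ is compact in the product topology. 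The map $F\mapsto x_F$ is continuous, since the partial sums $\sum_{n\leq N}E_{F,n}/(q_1\cdots q_n)$ depend on finitely many coordinates and the tails are bounded uniformly by $\sum_{n>N}1/(q_1\cdots q_{n-1})\to 0$. Hence $\tq$, being the continuous image of a compact set, is compact and in particular closed and nonempty.

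For the claim that $\tq$ is perfect it remains to show it has no isolated points. By \refl{omegasize} there are infinitely many $n$ with $i(n)\geq 2$, and at each such $n$ we have $\omega(n)>2$, so $D_n$ contains at least two distinct digits. Given $x_F\in\tq$ and $\e>0$, choose $N$ with $1/(q_1\cdots q_N)<\e$ and a position $n_0>N$ with $i(n_0)\geq 2$. Pick $E'\in D_{n_0}$ with $E'\neq E_{F,n_0}$ and let $F'$ agree with $F$ except in coordinate $n_0$, where its digit is $E'$; then $F'\in\mathscr{F}$ and
$$
0<|x_{F'}-x_F|=\frac{|E'-E_{F,n_0}|}{q_1\cdots q_{n_0}}\leq\frac{q_{n_0}-1}{q_1\cdots q_{n_0}}<\frac{1}{q_1\cdots q_N}<\e.
$$
Thus every point of $\tq$ is a limit of other points of $\tq$, so $\tq$ is perfect.

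For nowhere density I would show $\la(\tq)=0$; since $\tq$ is closed, a set of measure zero has empty interior and is therefore nowhere dense. For every $N$ the set $\tq$ is contained in the union of those $Q$-Cantor cylinders of rank $N$ whose $n$-th digit lies in $D_n$ for $1\leq n\leq N$; this union has Lebesgue measure $\prod_{n=1}^N\omega(n)/q_n$, so $\la(\tq)\leq\prod_{n=1}^\infty\omega(n)/q_n$. At each position with $i(n)=j\geq 2$ we have, using $\omega(n)=1+\floor{q_n/a(n)^2}$ with $a(n)=S_j$ and $q_n\geq S_j^{2j}\geq S_j^2$,
$$
\frac{\omega(n)}{q_n}\leq\frac{1}{q_n}+\frac{1}{S_j^2}\leq\frac{2}{S_j^2}\leq\frac{1}{2},
$$
the last inequality because $S_j\geq s_1\geq 2$ for $j\geq 2$. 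Since for each $j\geq 2$ there are $l_jS_j\geq 1$ positions with $i(n)=j$ (by \refl{threeprops}), the infinite product has infinitely many factors that are at most $1/2$, hence equals $0$. Therefore $\la(\tq)=0$ and $\tq$ is nowhere dense.

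The main obstacle is the measure-zero estimate: it is essential that the admissible digit sets $D_n$ occupy only a vanishing proportion of the full digit range at the levels $j\geq 2$, and this relies precisely on the growth condition $q_n\geq S_j^{2j}$ built into the choice of $\nu_j$, which makes the floor term in $\omega(n)=1+\floor{q_n/S_j^2}$ dominate while keeping $\omega(n)/q_n$ close to $1/S_j^2$. The compactness reformulation in the first step is what makes the closedness (and hence the passage from measure zero to nowhere density) painless; without it one would have to argue directly that limits of members of $\tq$ have admissible digits.
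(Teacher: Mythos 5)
Your proof is correct, and it is worth noting that the paper gives no argument at all for this theorem: it simply defers to Theorems 3.8 and 3.10 of \cite{Mance3}. Your write-up is therefore a genuine, self-contained replacement rather than a variant of an in-paper proof. The structure is sound: since the defining constraints on $\mathscr{F}$ are coordinate-wise, the identification of $\mathscr{F}$ with the product $\prod_n D_n$ of finite nonempty digit sets is legitimate, $F\mapsto x_F$ is continuous (the tail bound $\sum_{n>N}E_{F,n}/(q_1\cdots q_n)\leq 1/(q_1\cdots q_N)$ is uniform), and compactness of the image gives closedness of $\tq$ for free. The isolated-point argument works because every $n>L_1$ has $i(n)\geq 2$, hence $\omega(n)>2$ by \refl{omegasize}, so a single admissible digit can be altered arbitrarily far out, changing $x_F$ by a nonzero amount smaller than any prescribed $\e$. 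For nowhere density, your reduction to $\la(\tq)=0$ is valid precisely because closedness has already been secured, and the estimate $\omega(n)/q_n\leq 1/q_n+1/a(n)^2\leq 2/S_{i(n)}^2\leq 1/2$ for $i(n)\geq 2$ (using $q_n\geq S_{i(n)}^{2i(n)}$ from the definition of $\nu_j$ and $S_j\geq 2$) drives the covering product $\prod_{n\leq N}\omega(n)/q_n$ to $0$. The analogous proofs in \cite{Mance3} establish nowhere density directly by locating gaps between consecutive admissible cylinders inside any interval meeting the set; your measure-theoretic shortcut is shorter and additionally records that $\la(\tq)=0$, at the mild cost of routing everything through the compactness reformulation. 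The only point I would flag is cosmetic: the interval $V_{j,b,c}$ as printed in the paper can escape $[0,1)$ when $c=S_j$ (an indexing slip in the source, not in your argument), so your proof, like the paper's construction, should be read with $c/a(\phi(j,b,c))$ interpreted as staying below $1$; nothing in your reasoning depends on this.
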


\subsection{Hausdorff dimension of $\Theta_{Q,S}$}
We will make use of the following general construction found in \cite{Falconer}.  Suppose that $[0,1]=I_0 \supsetneq I_1 \supsetneq I_2\supsetneq \ldots$ is a decreasing sequence of sets, with each $I_k$ a union of a finite number of disjoint closed intervals (called {\it $k^{th}$ level basic intervals}).  Then we will consider the set $\cap_{k=0}^{\infty} I_k$.  We will construct a set $\tq'$ that may be written in this form such that $\dimht=\dimhtp$. 

Given a block of digits $B=(b_1,b_2,\ldots,b_s)$ and a positive integer $n$, define 
$$
\mathscr{S}_{Q,B}=\{x=0.E_1E_2\ldots\hbox{ w.r.t }Q : E_1=b_1, \ldots, E_{t}=b_s\}.
$$
Let $P_n$  be the set of all possible values of $E_n(x)$ for $x \in \tq$. Put $J_0=[0,1)$ and
$$
J_k=\bigcup_{B \in \prod_{n=1}^{k} P_n} \mathscr{S}_{Q,B}.
$$
Then $J_k \subsetneq J_{k-1}$ for all $k \geq 0$ and $\tq=\cap_{k=0}^{\infty} J_k$, which gives the following.

\begin{prop}
The set $\tq$ can be written in the form $\cap_{k=0}^{\infty} J_k$, where each $J_k$ is the union of a finite number of disjoint half-open intervals.
\end{prop}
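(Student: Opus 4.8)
The plan is to exploit two structural features of the construction: that each cylinder set $\mathscr{S}_{Q,B}$ is genuinely a half-open interval, and that membership in $\tq$ is governed by independent, coordinate-wise constraints on the digits $(E_n)$, so that $\tq$ carries a product structure across the positions $n$. First I would record the standard fact that for a block $B=(b_1,\ldots,b_k)$ one has
$$
\mathscr{S}_{Q,B}=\left[\sum_{i=1}^{k}\frac{b_i}{q_1\cdots q_i},\ \sum_{i=1}^{k}\frac{b_i}{q_1\cdots q_i}+\frac{1}{q_1\cdots q_k}\right),
$$
a half-open interval of length $1/(q_1\cdots q_k)$; this follows from the canonical uniqueness of the $Q$-Cantor series expansion in exactly the same way as for $b$-ary cylinders. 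That same uniqueness shows each $x\in[0,1)$ lies in precisely one level-$k$ cylinder, so that distinct blocks $B\neq B'$ of length $k$ yield disjoint intervals.

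Next I would verify the finiteness and disjointness claims for $J_k$. Since $P_n\subseteq\{0,\ldots,q_n-1\}$ is finite for each $n$, the index set $\prod_{n=1}^{k}P_n$ is finite, so $J_k$ is a union of finitely many of the intervals above, pairwise disjoint by the previous paragraph. The nesting $J_k\subseteq J_{k-1}$ is immediate: any admissible block $B=(b_1,\ldots,b_k)\in\prod_{n=1}^{k}P_n$ restricts to its length-$(k-1)$ prefix $B'\in\prod_{n=1}^{k-1}P_n$ with $\mathscr{S}_{Q,B}\subseteq\mathscr{S}_{Q,B'}\subseteq J_{k-1}$.

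Then I would prove $\tq=\bigcap_{k=0}^{\infty}J_k$. The crux is that the defining condition $F\in\mathscr{F}$ constrains each coordinate independently, namely $E_{F,n}/q_n\in V_{\phi^{-1}(n)}$; consequently $P_n$ is exactly the set of $e\in\{0,\ldots,q_n-1\}$ with $e/q_n\in V_{\phi^{-1}(n)}$. Here I would invoke the non-emptiness guaranteed by \refl{omegasize}: to realize any single allowed digit value at position $n$ as $E_{F,n}$ for some $F\in\mathscr{F}$, one fixes that coordinate and chooses all others arbitrarily among their nonempty allowed ranges. With this identification, $\tq=\{x\in[0,1):E_n(x)\in P_n\text{ for all }n\}$. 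For $\tq\subseteq\bigcap_k J_k$, any $x_F$ has all of its first $k$ digits in $P_n$, so $x_F\in J_k$ for every $k$. For the reverse inclusion, a point $x\in\bigcap_k J_k$ lies in a nested sequence of cylinders whose blocks are, by uniqueness of the expansion, the successive prefixes of the canonical expansion of $x$; hence $E_n(x)\in P_n$ for every $n$, so $x\in\tq$.

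The main obstacle is the identification of $P_n$ with the full set of allowed digits, together with the verification that no spurious endpoint enters $\bigcap_k J_k$. Both are handled by the independence of the coordinate constraints and by canonical uniqueness (the condition $E_n\neq q_n-1$ infinitely often holds automatically for the canonical expansion of any $x\in[0,1)$), so that no point with all digits in $P_n$ is lost and none is spuriously gained.
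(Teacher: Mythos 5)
Your proposal is correct and follows essentially the same route as the paper, which simply defines $P_n$, the cylinders $\mathscr{S}_{Q,B}$, and the sets $J_k$, and then asserts the nesting and the identity $\tq=\bigcap_{k=0}^{\infty}J_k$ without further argument. You have merely filled in the routine verifications (cylinders are disjoint half-open intervals, $P_n$ is finite and nonempty by \refl{omegasize}, and the coordinate-wise independence of the constraints defining $\mathscr{F}$ gives both inclusions), all of which are sound.
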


We now set $I_k=\overline{J_k}$ for all $k \geq 0$ and put $\tqp=\cap_{k=0}^{\infty} I_k$.  Since each set $J_k$ consists of only a finite number of intervals, the set $I_k \backslash J_k$ is finite.

\begin{lem}
For all $Q$ and $S$, we have $\dimht=\dimhtp$.%\footnote{There is no guarantee that any of the box-counting dimensions of $\Theta_{Q,S}'}
\end{lem}

\begin{proof}
The lemma follows as $\tqp \backslash \tq$ is a countable set.
\end{proof}
We need the following key theorem from \cite{Falconer}.
\begin{thrm}\labt{hdlower}
Suppose that each $(k-1)^{th}$ level interval of $I_{k-1}$ contains at least $m_k$ $k^{th}$ level intervals ($k=1,2,\ldots$) which are separated by gaps of at least $\e_k$, where $0 \le \e_{k+1} < \e_k$ for each $k$.  Then
$$
\dimh \left( \bigcap_{k=0}^{\infty} I_k \right) \geq \liminf_{k \to \infty} \frac {\log (m_1m_2 \cdots m_{k-1})} {-\log (m_k\e_k)}.
$$
\end{thrm}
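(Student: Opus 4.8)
The plan is to establish this lower bound by the \emph{mass distribution principle} (Frostman's lemma), the standard device for bounding Hausdorff dimension from below. First I would build a natural probability measure $\mu$ carried by $F:=\bigcap_{k=0}^\infty I_k$. Since each $(k-1)^{th}$ level interval contains \emph{at least} $m_k$ children, I would first discard all but exactly $m_k$ of them at every stage; this only shrinks $F$, so a lower bound for the reduced set is a lower bound for $F$. Then I define $\mu$ by repeated equal subdivision: the mass of each $(k-1)^{th}$ level interval is split evenly among its $m_k$ retained children, so that every $k^{th}$ level interval carries mass exactly $(m_1 m_2\cdots m_k)^{-1}$. As there are $m_1\cdots m_k$ intervals at level $k$, the total mass is $1$, and passing to the limit (via the usual consistency/extension argument for nested cylinders) yields a Borel probability measure $\mu$ with $\mathrm{supp}\,\mu\subset F$.

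Next, fix any $s<\liminf_{k}\frac{\log(m_1\cdots m_{k-1})}{-\log(m_k\e_k)}$; it suffices to prove $\dimh{F}\ge s$ and then let $s$ increase to the liminf. By the mass distribution principle it is enough to find $\delta>0$ with $\mu(U)\le 2\,|U|^s$ for every interval $U$ with $|U|\le\delta$ (an arbitrary set may be replaced by its convex hull, an interval of the same diameter). Given such a $U$, I would locate the index $k$ with $\e_k\le|U|<\e_{k-1}$, using that $(\e_k)$ strictly decreases to $0$, and I would take $\delta$ small enough that this $k$ exceeds the threshold beyond which the liminf inequality holds.

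The crux is bounding the number $N$ of $k^{th}$ level intervals that $U$ can meet, for which I would give two independent estimates. First, because the gaps decrease, any two distinct $(k-1)^{th}$ level intervals are separated by at least $\e_{k-1}$ (siblings by hypothesis; non-siblings by the even larger gaps at coarser levels). Hence $|U|<\e_{k-1}$ forces $U$ into a single $(k-1)^{th}$ level interval, so $N\le m_k$. Second, all $k^{th}$ level intervals are pairwise separated by at least $\e_k$, so $N\le 1+|U|/\e_k\le 2|U|/\e_k$ since $|U|\ge\e_k$. Interpolating via $\min\{a,b\}\le a^{1-s}b^{s}$ gives $N\le m_k^{1-s}(2|U|/\e_k)^s$, whence
$$
\mu(U)\le \frac{N}{m_1\cdots m_k}\le \frac{m_k^{1-s}(2|U|/\e_k)^s}{m_1\cdots m_k}=2^s\,|U|^s\,\frac{(m_k\e_k)^{-s}}{m_1\cdots m_{k-1}}.
$$
The defining inequality $s<\frac{\log(m_1\cdots m_{k-1})}{-\log(m_k\e_k)}$ is exactly equivalent to $(m_k\e_k)^{-s}<m_1\cdots m_{k-1}$ for all large $k$, which makes the trailing fraction less than $1$; together with $2^s\le 2$ this yields $\mu(U)\le 2\,|U|^s$. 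The mass distribution principle then gives $\mathcal{H}^s(F)\ge \mu(F)/2>0$, so $\dimh{F}\ge s$, and letting $s$ tend to the liminf finishes the proof.

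The main obstacle is precisely this counting/interpolation step: each bound on $N$ is individually too weak (the $m_k$ bound fails when $U$ is much shorter than $m_k\e_k$, and the $|U|/\e_k$ bound fails when $U$ meets only a few intervals), and it is the geometric-mean interpolation combined with the exact form of the liminf that aligns the exponents so the product collapses to a clean power of $|U|$. The one other place needing care is the separation claim for $(k-1)^{th}$ level intervals, which genuinely relies on the monotonicity $\e_{k+1}<\e_k$; everything else (the construction and consistency of $\mu$, and the reduction from sets to intervals) is routine.
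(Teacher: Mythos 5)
The paper offers no proof of this statement at all: it is quoted as a ``key theorem from \cite{Falconer}'' (it is Example 4.6 of Falconer's book), so there is nothing internal to compare against. Your argument is exactly the standard proof from that source: prune to exactly $m_k$ children, spread unit mass evenly so each $k^{th}$ level interval carries $(m_1\cdots m_k)^{-1}$, locate $k$ with $\e_k\le |U|<\e_{k-1}$, bound the number of $k^{th}$ level intervals meeting $U$ by both $m_k$ (using that distinct $(k-1)^{th}$ level intervals are $\e_{k-1}$-separated, which as you note needs the monotonicity of the gaps) and $1+|U|/\e_k\le 2|U|/\e_k$, interpolate via $\min\{a,b\}\le a^{1-s}b^{s}$, and invoke the mass distribution principle. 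The argument is correct; the only implicit conventions you inherit from Falconer are that $m_k\e_k<1$ (so the denominator in the liminf is positive) and $s\le 1$ (needed for the interpolation step), both of which are harmless in the application made in this paper.
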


\begin{thrm}\labt{dimhtq}  Suppose that $Q$ is infinite in limit and
\begin{equation}\labeq{growth}
\log q_k=o\left( \sum_{n=1}^{k-1} \log q_n \right).
\end{equation}
Then 
$\dimh{\left(\Theta_{Q,S}\right)}=1$.
\end{thrm}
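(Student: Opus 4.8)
The plan is to apply Falconer's lower bound (\reft{hdlower}) directly to the nested construction $\tq=\bigcap_{k=0}^\infty I_k$ set up in the preceding subsection. By the lemma preceding \reft{hdlower} it is enough to bound $\dimh$ from below, since the upper bound $\dimht\le 1$ is automatic for a subset of $[0,1)$. The two quantities I must read off from the construction are the branching number $m_k$ and the separation $\e_k$ at each level, after which everything reduces to evaluating one $\liminf$.

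First I would determine the branching and the gaps. Passing from a level-$(k-1)$ interval to its children means appending one admissible digit, so by \refl{omegasize} the number of children is $\omega$ at the appropriate position, and this is uniform over parents because $\omega$ depends only on the position, not the prefix. The subtle point is the separation: since the admissible values of a digit $E_n$ form a block of \emph{consecutive} integers, the cylinders sharing a prefix are adjacent and merge when we take closures, so the genuine gap between two children comes from the confinement imposed by the \emph{next} digit. Concretely, the explicit form of $V_{j,b,c}$ forces the admissible part of each cylinder to occupy only a sub-interval of relative length $1/a(\cdot)^2$ seated at relative position $c/a(\cdot)$; hence two consecutive children are separated by a gap of relative length $1-1/a(\cdot)^2\ge 3/4$, using $a(\cdot)=S_{i(\cdot)}\ge 2$ whenever $i(\cdot)\ge2$. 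This gives $\e_k\ge \tfrac34\big/\prod_{n=1}^{k-1}q_n$, which is positive and strictly decreasing because $q_n\ge2$. I would record that this confinement-from-the-next-digit mechanism shifts the relevant digit index by one relative to the level, but this bounded shift is immaterial for the $\liminf$ below.

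Next I would feed $m_k=\omega(\cdot)$ and $\e_k\asymp\big(\prod_{n=1}^{k-1}q_n\big)^{-1}$ into \reft{hdlower}. Writing $D_k=\sum_{n=1}^{k}\log q_n$, the product $\prod q_n$ dominates both $\omega$ and the constant $\tfrac34$, so $-\log(m_k\e_k)=D_{k-1}-\log\omega(\cdot)+O(1)\le D_{k-1}+O(1)$, while $\log(m_1\cdots m_{k-1})=\sum_{n=1}^{k-2}\log\omega(n)$. Thus Falconer yields
$$
\dimht\ \ge\ \liminf_{k\to\infty}\frac{\sum_{n=1}^{k-2}\log\omega(n)}{D_{k-1}+O(1)},
$$
and it remains to show this $\liminf$ equals $1$.

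Finally I would carry out the asymptotic estimate, which I expect to be the real content. By \refl{omegasize}, $\log\omega(n)\ge\big(1-1/i(n)\big)\log q_n$ for $i(n)\ge2$, so
$$
D_{k-1}-\sum_{n=1}^{k-2}\log\omega(n)\ \le\ \log q_{k-1}+\sum_{i(n)=1}\log q_n+\sum_{n\le k-2}\frac{\log q_n}{i(n)}.
$$
The middle sum is a fixed constant, because only finitely many positions satisfy $i(n)=1$ (they all lie in the first block, $n\le l_1$). The last sum is $o(D_{k-1})$: since each value of the block index is attained only finitely often, $i(n)\to\infty$, and a Cesàro/\refl{tcorr}-type argument then forces $\sum_{n\le k-2}\frac{\log q_n}{i(n)}=o(D_{k-1})$. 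The remaining leading term $\log q_{k-1}$ is $o(D_{k-1})$ precisely by the growth hypothesis \refeq{growth}. Combining these, $\sum_{n=1}^{k-2}\log\omega(n)=D_{k-1}\big(1+o(1)\big)$ with $D_{k-1}\to\infty$, so the displayed ratio tends to $1$; hence $\dimht\ge1$ and therefore $\dimht=1$. The main obstacle I anticipate is bookkeeping rather than a new idea: pinning down $\e_k$ through the next-digit confinement (together with the one-step index shift caused by the merging of adjacent cylinders), and confirming that \refeq{growth} is exactly the hypothesis needed to absorb the single extra factor $\log q_{k-1}$ that Falconer's denominator carries over the numerator.
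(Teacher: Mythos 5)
Your proposal follows essentially the same route as the paper: it applies Falconer's bound (\reft{hdlower}) to the nested construction with $m_k$ read off from \refl{omegasize}, identifies the gaps $\e_k$ as coming from the confinement of the next digit to the sub-interval $V_{j,b,c}$ of relative length $1/a(\cdot)^2$ (so $\e_k\asymp(q_1\cdots q_{k-1})^{-1}$), and evaluates the resulting $\liminf$ via the Ces\`aro lemma \refl{tcorr} together with the growth hypothesis \refeq{growth}. The argument is correct, and your explicit treatment of the finitely many positions with $i(n)=1$ and of the one-step index shift is, if anything, slightly more careful than the paper's bookkeeping.
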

\begin{proof}
We wish to better describe the $k^{th}$ level basic intervals $J_k$ in order to apply \reft{hdlower}.  We note that when $a(k)>1$,  each $k^{th}$ level basic interval is contained in
\begin{equation}\labeq{basicintervals}
\left[\sum_{n=1}^{k-1} \frac {E_n} {q_1\cdots q_n}+\frac {1} {q_1 \cdots q_{k-1}} \cdot \left(\frac {c(k)}{a(k)}+\frac {1} {a(k)^2} \right),\sum_{n=1}^{k-1} \frac {E_n} {q_1\cdots q_n}+\frac {1} {q_1 \cdots q_{k-1}} \cdot \left(\frac {c(k)}{a(k)}+\frac {2} {a(k)^2} \right)       \right]
\end{equation}
for some $(E_1,E_2,\cdots,E_{k-1}) \in \prod_{n=1}^{k-1} P_n$.  Thus, by \refl{omegasize}, there are at least $q_{k-1}^{1-1/i(k-1)}$ $k^{th}$ level basic intervals contained in each $(k-1)^{th}$ level basic interval.  By \refeq{basicintervals}, they are separated by gaps of length at least
\begin{align*}
&\left(\sum_{n=1}^{k-2} \frac {E_n} {q_1\cdots q_n}+\frac {E_{k-1}+1}{q_1\cdots q_{k-1}}+\frac {1} {q_1 \cdots q_{k-1}} \cdot \left(\frac {c(k)}{a(k)}+\frac {1} {a(k)^2} \right) \right)-\left(\sum_{n=1}^{k-1} \frac {E_n} {q_1\cdots q_n}+\frac {1} {q_1 \cdots q_{k-1}} \cdot \left(\frac {c(k)}{a(k)}+\frac {1} {a(k)^2} \right) \right)\\
&=\frac {(E_{k-1}+1)-E_{k-1}} {q_1 \cdots q_{k-1}}-\frac {1} {q_1 \cdots q_{k-1}} \frac {1} {a(k)^2}=\frac {1-1/a(k)^2} {q_1 \cdots q_{k-1}}.
\end{align*}
Thus, we may apply \reft{hdlower} with $m_k=q_{k-1}^{1-1/i(k-1)}$ and $\epsilon_k=\frac {1-1/a(k)^2} {q_1 \cdots q_{k-1}}$.  But $1-1/a(k) \to 1$, so
\begin{align*}
\dimh{\Theta_{Q,S}} &\geq \liminf_{k \to \infty} \frac{ \log \prod_{n=L_1}^{k-1} q_n^{1-1/i(n)}} {-\log \left(\left(q_k^{1-1/i(k)}+1\right) \cdot \frac{1} {q_1 q_2 \cdots q_{k-1}}\right)}
=\liminf_{k \to \infty} \frac {\sum_{n=1}^{k-1} \left(1-\frac {1} {i(n)}\right)  \log q_n}{\sum_{n=1}^{k-1} \log q_n-\left(1-\frac {1} {i(k)} \right) \log q_k}\\
&=\liminf_{k \to \infty} \frac {\sum_{n=1}^{k-1} \left(1-\frac {1} {i(n)}\right)  \log q_n}{\sum_{n=1}^{k-1} \log q_n}=1
\end{align*}
by \refl{tcorr} and \refeq{growth} since
$$
\lim_{k \to \infty} \frac {\left(1-\frac {1} {i(k-1)}\right)  \log q_{k-1}} {\log q_{k-1}}=\lim_{k \to \infty} \left(1-\frac {1} {i(k)} \right) = 1.
$$
Thus, $\dimh{\Theta_{Q,S}}=1$.
\end{proof}

Clearly, every $1$-convergent basic sequence satisfies \refeq{growth}.  So, part (2) of \reft{mainthrm} follows by \refc{inRNisect} and \reft{dimhtq}.

\section{Further Remarks}

We observed after \refl{modS} that it was key to be able to approximate $\frac {E_n}{q_n}$ for $n \equiv 1 \pmod{S_j}$.  Part (2) of \reft{mainthrm} can be extended to a larger intersection of sets of the form $\DNQ \backslash \RNk{Q}{1}$ by estimating $\frac {E_n} {q_n}$ for $n \equiv r \pmod{S_j}$, $r=0,1,\cdots,S_j-1$.  Given $Q=Q_1 \LS{1} Q_2 \LS{2} Q_3 \cdots$,  define $Q_{j,k}=(q_{j,k,n})$ by
\begin{displaymath}
q_{j,k,n}=\left\{ \begin{array}{ll}
\prod_{j=1}^k q_j & \textrm{if $n=1$}\\
 & \\
\prod_{j=1}^s q_{s(n-1)+j+k} & \textrm{if $n>1$}
\end{array} \right. ,
\end{displaymath}
so $Q_j=Q_{j,0}$.  
%See Figure~2 for an illustration.  
With only small modifications of the preceeding proofs, we may conclude that
$$
\tq \subsetneq \bigcap_{j=1}^\infty \bigcap_{k=0}^{S_j-1} \DN{Q_{j,k}} \backslash \RNk{Q_{j,k}}{1}
\hbox{ and }
\dimh\left(\bigcap_{j=1}^\infty \bigcap_{k=0}^{S_j-1} \DN{Q_{j,k}} \backslash \RNk{Q_{j,k}}{1} \right)=1.
$$
The techniques introduced in this paper are unlikely to settle the following questions.
For an arbitrary countable collection of infinite in limit basic sequences $(Q_j)$, is it true that $\dimh\left( \RNisect\right)=1$?
A more difficult problem would be to construct an explicit example of a member of $\RNisect$.  The problem gets much harder if we loosen the restriction that $Q_j$ is infinite in limit.  In fact, it is still an open problem to construct an explicit example of a member of $\DNQ$ for an arbitrary $Q$.  See \cite{Laffer} for more information.

%\bibliographystyle{amsplain}

%\bibliography{mance} 
\providecommand{\bysame}{\leavevmode\hbox to3em{\hrulefill}\thinspace}
\providecommand{\MR}{\relax\ifhmode\unskip\space\fi MR }
% \MRhref is called by the amsart/book/proc definition of \MR.
\providecommand{\MRhref}[2]{%
  \href{http://www.ams.org/mathscinet-getitem?mr=#1}{#2}
}
\providecommand{\href}[2]{#2}

%\end{thebibliography}
\end{document}